\documentclass[10pt,oneside,a4paper]{article}
\usepackage{amsmath} 
\usepackage{amsfonts} 
\usepackage{amssymb} 
\usepackage{amsthm} 
\usepackage{hyperref}

\usepackage{stmaryrd} 
\usepackage{accents} 

\usepackage{xcolor} 
\usepackage[normalem]{ulem} 
\usepackage{soul} 

\usepackage[algo2e,vlined,ruled,algosection,linesnumberedhidden]{algorithm2e}

\usepackage{multirow} 
\usepackage{booktabs} 

\usepackage{graphicx}
\usepackage{subcaption} 

\usepackage{tikz} 
\usetikzlibrary{positioning}

\newcommand{\Be}{\boldsymbol{e}}

\newcommand{\Bg}{\boldsymbol{g}}

\newcommand{\Bu}{\boldsymbol{u}}
\newcommand{\Bv}{\boldsymbol{v}}

\newcommand{\Bx}{\boldsymbol{x}}

\newcommand{\hBx}{\boldsymbol{\hat x}}

\newcommand{\BE}{\boldsymbol{E}}

\newcommand{\BH}{\boldsymbol{H}}

\newcommand{\BL}{\boldsymbol{L}}
\newcommand{\BM}{\boldsymbol{M}}
\newcommand{\BN}{\boldsymbol{N}}

\newcommand{\BU}{\boldsymbol{U}}
\newcommand{\BV}{\boldsymbol{V}}

\newcommand{\BX}{\boldsymbol{X}}

\newcommand{\bbC}{\mathbb{C}}

\newcommand{\bbE}{\mathbb{E}}

\newcommand{\bbN}{\mathbb{N}}

\newcommand{\bbR}{\mathbb{R}}

\newcommand{\calK}{\mathcal{K}}
\newcommand{\calL}{\mathcal{L}}
\newcommand{\calM}{\mathcal{M}}

\newcommand{\calP}{\mathcal{P}}
\newcommand{\calQ}{\mathcal{Q}}

\newcommand{\calS}{\mathcal{S}}
\newcommand{\calT}{\mathcal{T}}

\newcommand{\calX}{\mathcal{X}}
\newcommand{\calY}{\mathcal{Y}}
\newcommand{\calZ}{\mathcal{Z}}

\newcommand{\Beta}{\boldsymbol{\eta}}

\newcommand{\Bnu}{\boldsymbol{\nu}}
\newcommand{\Bxi}{\boldsymbol{\xi}}

\newcommand{\Bphi}{\boldsymbol{\phi}}
\newcommand{\Bvarphi}{\boldsymbol{\varphi}}

\newcommand{\Bpsi}{\boldsymbol{\psi}}

\newcommand{\rn}[1]{\uppercase\expandafter{\romannumeral #1}}

\newcommand{\nn}{\nonumber}
\newcommand{\be}{\begin{eqnarray}}
\newcommand{\ee}{\end{eqnarray}}
\newcommand{\ben}{\begin{eqnarray*}}
\newcommand{\een}{\end{eqnarray*}}

\newtheorem{theorem}{Theorem}[section]
\newtheorem{lemma}[theorem]{Lemma}

\newtheorem{definition}[theorem]{Definition}

\numberwithin{equation}{section}
\numberwithin{theorem}{section}

\numberwithin{table}{section}
\numberwithin{figure}{section}

\newcommand{\bs}{\backslash}

\newcommand{\N}[1]{\left\|{#1}\right\|} 
\newcommand{\abs}[1]{\left|{#1}\right|} 

\newcommand{\rmi}{\mathrm{i}}

\newcommand{\rmd}{\mathrm{d}}

\providecommand{\Div}{\operatorname{Div}} 

\providecommand{\curl}{\operatorname{\mathbf{curl}}} 
\providecommand{\Curl}{\operatorname{Curl}} 

\renewcommand{\Re}{\operatorname{Re}} 
\renewcommand{\Im}{\operatorname{Im}} 

\newcommand{\Lpv}[2][\defaultdomain]{\BL^{#2}({#1})}

\newcommand{\NLpv}[3][\defaultdomain]{\N{#2}_{\Lpv[#1]{#3}}}

\newcommand{\Ltwov}[1][\defaultdomain]{\Lpv[#1]{2}}

\newcommand{\NLtwov}[2][\defaultdomain]{\NLpv[#1]{#2}{2}}

\newcommand{\Hcurl}[1][\defaultdomain]{\boldsymbol{H}(\curl,{#1})}
\newcommand{\bHcurl}[2][\defaultdomain]{\boldsymbol{H}_{#2}(\curl,{#1})}

\makeatletter
\renewcommand{\@biblabel}[1]{#1.}
\makeatother

\begin{document}

\title{Edge element DtN method for electromagnetic scattering poles of perfectly conducting obstacles}

\author{Bo Gong$^1$, Takumi Sato$^2$, Jiguang Sun$^3$, Xinming Wu$^{2,}$\thanks{Corresponding author. Email address: wuxinming@fudan.edu.cn}}

\footnotetext[1]{School of Mathematics, Statistics and Mechanics, Beijing University of Technology, Beijing, 100124, China.}
\footnotetext[2]{School of Mathematical Sciences, SKLCAM, Fudan University, Shanghai 200433, China.}
\footnotetext[3]{Department of Mathematical Sciences, Michigan Technological University, Houghton, MI 49931, U.S.A.}

\date{}

\maketitle

\begin{abstract}
Meromorphic continuation of the scattering operator leads to scattering poles (resonances) in the complex plane. Despite their significance, numerical investigation  of scattering poles remains limited. In this paper, we propose and analyze a numerical method to compute electromagnetic poles of perfectly conducting obstacles. The unbounded domain for the scattering problem is truncated using the DtN mapping and the poles are shown to be the eigenvalues of a holomorphic Fredholm operator function related to Maxwell's equations. Edge elements are used for discretization. The convergence is proved using the abstract approximation theory for eigenvalue problems of holomorphic Fredholm operator functions. The proposed finite element DtN approach is free of non-physical poles. A spectral indicator method is then employed to compute the resulting nonlinear matrix eigenvalue problem. Numerical examples are presented to demonstrate the effectiveness of the method.
\end{abstract}

\newenvironment{keywords}{\par\noindent\textbf{Keywords}}{}

\begin{keywords}
scattering pole, finite element, holomorphic Fredholm operator function, DtN mapping, error estimates
\end{keywords}

\section{Introduction}

Scattering resonances, governing energy decay, field confinement, and spectral response, etc., play a fundamental role in wave applications such as antenna design, acoustic resonators, metallic grating structures, and inverse spectral problems \cite{Baum86, LaxPhillips1989, Huang, Cakoni2020ProcA}. Mathematically, scattering resonances are the poles of the meromorphic continuation of the scattering operator in the complex plane \cite{Zworski1999, Taylor1996, DyatlovZworski2019, Uberall83}. In this paper, we do not distinguish resonances and poles, although the definition of resonances can be different depending on context. 

Despite their significance, systematic investigation on resonance computation remains limited. For perfectly conducting
obstacles, scattering resonances are  complex with negative imaginary parts, and their associated eigenfunctions grow exponentially away from the scatterer. The development of numerical methods for scattering resonances faces delicate challenges such as spectral pollution, domain truncation, and nonlinearity of discrete systems. Furthermore, error analysis of computational methods for nonlinear spectral problems is largely undeveloped, limiting both theoretical understanding and computational reliability. 


There exist two main groups of numerical methods for scattering poles. The first group relies on boundary integral operators, which inherently satisfy the outgoing condition \cite{SteinbachUnger2012, OlafUnger2017MMAS, Unger21, MaSun2023, Liu2025WM, Matsushima2025ProcA}. These methods require discretization only on the boundary of the scatterer. However, standard boundary integral formulations introduce non-physical resonances which can be difficult to distinguish (see \cite{Unger21, Hiptmair2022, Grubisic2023}). Complicate structures and non-homogeneous materials poses additional difficulties.

The second group comprises finite element methods, which typically require the truncation of the unbounded domain. The Dirichlet-to-Neumann (DtN) mapping and the Perfectly Matched Layer (PML) have been employed \cite{Lenoir1992, KimPasciak2009MC, Kim2014, NannenWess2018BIT, Araujo2021JCP, HohageNannen2009, Halla2022SINUM, XiLinSun2024CMA, XiGongSun2024, Gong2026AML}. While (frequency-dependent) PML results in a linear eigenvalue problem, it introduces spurious eigenvalues that are difficult to identify. In addition, since the eigenfunctions increase exponentially away from the scatterer, to force them decay in the PML layer, the choice of PML parameters presents a significant challenge. We refer the readers to \cite{KimPasciak2009MC, NannenWess2018BIT} for more discussion. In contrast, DtN mapping does not contaminate the spectrum and is particularly advantageous with the development of highly efficient and robust contour integral-based methods for nonlinear matrix eigenvalue problems \cite{XiGongSun2024, XiLinSun2024CMA, XiSun2023, Gong2026AML, SunZhou2016}. It is worth noting that Hardy space infinite elements have also been investigated \cite{HohageNannen2009, Nannen2013SISC}.


In this paper, we propose and analyze the edge element DtN method to compute the scattering poles of a perfectly conducting  obstacle. The scattering poles are formulated as the eigenvalues of a holomorphic Fredholm operator function defined by the variational formulation related to the scattering problem. The unbounded domain is truncated and the DtN mapping is employed, leading to an equivalent problem posed on the bounded domain (free of non-physical poles). The convergence is proved using the abstract spectral approximation theory of holomorphic Fredholm operator functions \cite{Karma1, Karma2} by taking into account both errors of the truncated DtN mapping and the edge element discretization. Finally, the parallel spectral indicator method (SIM) is used to compute all the poles in a region on the lower half complex plane \cite{XiSun2023}. The main contributions are as follows. 1) We construct a holomorphic Fredholm operator function using the DtN mapping such that its eigenvalues coincide with the scattering poles on the continuous level; 2) Error estimates of the edge element DtN method are obtained, which is considerably more technical than the acoustic case. The analysis is carried out using curl-conforming function spaces and avoids the need for a Helmholtz decomposition;
3) We present several numerical examples, which demonstrate the theory and serve as benchmarks for future investigation as there exist few full 3D electromagnetic simulations.

The rest of the paper is organized as follows. In Section~\ref{Sec2}, we introduce the electromagnetic scattering problem due to a perfectly conducting obstacle and truncate the unbounded domain using the DtN mapping. We formulate the scattering poles as the eigenvalues of a holomorphic Fredholm operator function. Property of the truncated DtN operator is also obtained. In Section~\ref{Sec3}, we discretize the operator function using the edge element  and prove the convergence of the eigenvalues. Numerical examples are presented in Section~\ref{Sec4}.


\section{Scattering poles of PEC obstacles}\label{Sec2}

We start with the time-harmonic electromagnetic scattering by a perfect electric conducting (PEC) obstacle.
Let $D\subset \bbR^3$ be a bounded Lipschitz 
domain with  boundary $\Gamma_D$ and let $\Bnu$ be the unit outer normal to $\Gamma_D$.
Given a known incident electromagnetic field $(\BE^{\rm inc}, \BH^{\rm inc})$, the total electromagnetic field $(\BE, \BH)$ is governed by Maxwell's equations
\begin{subequations}
\begin{align}
&\curl\BE - \rmi\kappa\BH = 0 &&\text{in}\ \ \bbR^3\bs\bar D, \label{eq:em1}\\
&\curl\BH + \rmi\kappa\BE = 0 &&\text{in}\ \ \bbR^3\bs\bar D, \label{eq:em2}\\
&\Bnu\times\BE = 0 &&\text{on}\ \ \Gamma_D, \label{eq:em3}\\
&|\Bx|\, (\BH^s\times\hBx - \BE^s) \to 0 &&\text{as}\ \  |\Bx| \to +\infty, \label{eq:em4}
\end{align}
\end{subequations}
where $\kappa$ is the wave number,  $\BE^s = \BE - \BE^{\rm inc}$ and $\BH^s = \BH - \BH^{\rm inc}$ are the scattered electric and magnetic fields, respectively. Equation \eqref{eq:em3} is the PEC boundary condition and equation \eqref{eq:em4} is the Silver-M\"uller radiation condition.
Eliminating the magnetic field $\BH$ from \eqref{eq:em1}-\eqref{eq:em2}, we obtain the scattering problem for the electric field
\begin{subequations}
\begin{align}
&\curl\curl\BE - \kappa^2\BE = 0 &&\text{in}\ \ \bbR^3\bs\bar D, \label{eq:e1}\\
&\Bnu\times\BE = 0 &&\text{on}\ \ \Gamma_D, \label{eq:e2}\\
&|\Bx| (\curl\BE^s\times\hBx - \rmi \kappa\BE^s) \to 0\ &&\text{as}\ \  |\Bx| \to +\infty.\label{eq:e3}
\end{align}
\end{subequations}
For $\Im(\kappa)\ge 0$, the scattering problem \eqref{eq:e1}-\eqref{eq:e3} has a unique solution $\BE\in \bHcurl[\bbR^3\bs\bar D]{\rm loc}$ (c.f. ~\cite{CK13, CK19, Monk03}). The solution operator function $\calS(\kappa): \BE^{\rm inc}\to \BE$ is holomorphic on the upper half complex plane and can be meromorphically extended to $\bbC\bs\bbR^-$ with $\bbR^-$ being the set of non-positive real numbers. The scattering resonances are the poles of $\calS(\kappa)$ in $\{\kappa\in\bbC: \Im(\kappa) < 0\}$. 


We now discuss the outgoing condition for the scattered fields, which is equivalent to \eqref{eq:e3} for $\kappa$ with $\Im(\kappa)\ge 0$ and suitable for $\kappa$ with $\Im(\kappa)< 0$.
Let $Y_n^m(\hBx),\, m=-n,\cdots,n,\,n=0,1,\cdots$ denote an orthonormal sequence of spherical harmonics on the unit sphere $\partial B_1$ such that 
\begin{equation*}
\Delta_{\partial B_1} Y_n^m(\hBx) + n(n+1) Y_n^m(\hBx) = 0\quad\text{on}\ \ \partial B_1, 
\end{equation*}
where
\begin{equation*}
\Delta_{\partial B_1}  = \frac{1}{\sin\theta}\frac{\partial}{\partial\theta}\left(\sin\theta\frac{\partial }{\partial\theta}\right)
+ \frac{1}{\sin^2\theta}\frac{\partial^2 }{\partial\phi^2}
\end{equation*}
is the Laplace-Beltrami operator.
Define the vector spherical harmonics by
\begin{equation}
\BU_n^m = \frac{1}{\sqrt{n(n+1)}}\nabla_{\partial B_1} Y_n^m,\qquad \BV_n^m = \hBx\times\BU_n^m
\label{eq:vec-sph-harm}
\end{equation}
for $n=1,2,\cdots$ and $m=-n,\cdots,n$, where
\begin{eqnarray*}
\nabla_{\partial B_1}  = \frac{\partial }{\partial\theta}\Be_\theta + \frac{1}{\sin\theta}\frac{\partial }{\partial\phi}\Be_\phi
\end{eqnarray*}
is the surface gradient operator for $\partial B_1$ and $\{\Be_r,\Be_\theta,\Be_\phi\}$ are the unit vectors of the spherical coordinates.
The set of all vector spherical harmonics defined in \eqref{eq:vec-sph-harm} forms a complete orthonormal basis for $\BL^2_t(\partial B_1) = \{\Bpsi\in\Ltwov[\partial B_1]:\, \Bpsi\cdot\hBx = 0\}$.

Let $h_n^{(1)}(z)$ be the spherical Hankel function of the first kind of order $n$. Define the vector wave functions
\begin{equation*}
\BM_n^m(r,\hBx) = \curl\{\Bx h_n^{(1)}(\kappa r) Y_n^m(\hBx)\},\quad \BN_n^m(r,\hBx) = \frac{1}{\rmi\kappa} \curl\BM_n^m(r,\hBx)
\end{equation*}
for $n=1,2,\cdots$ and $m=-n,\cdots,n$. 
Since the function $h_n^{(1)}(\kappa r) Y_n^m(\hBx)$ satisfies the Helmholtz equation,  it can be shown that the vector functions $\BM_n^m(r,\hBx)$ and $\BN_n^m(r,\hBx)$ also constitute solutions to Maxwell's equations in $\bbR^3\bs\{0\}$. Moreover, when $\Im(\kappa)\ge 0$ these functions satisfy the Silver-M\"uller condition (c.f.~\cite{Monk03}).

Let the obstacle $D$ be contained in the ball $B_R=\{\Bx\in\bbR^3\, :\, |\Bx|<R\}$ with the boundary $\Gamma_R$.
It is well-known that, for $\Im(\kappa)\ge 0$, the radiation condition \eqref{eq:e3} for $\BE^s$ is equivalent to the series expansion of the form
\begin{equation}
\BE^s(r,\hBx) =  \sum_{n=1}^{\infty}\sum_{m=-n}^n \alpha_n^m\BM_n^m(r,\hBx) + \beta_n^m\BN_n^m(r,\hBx),\quad r=|\Bx| > R.
\label{eq:outgoing}
\end{equation}
We say $\BE^s$ is outgoing if it satisfies \eqref{eq:outgoing} and refer \eqref{eq:outgoing} as the outgoing condition. Since poles have negative imaginary part, the Silver-M\"uller radiation condition \eqref{eq:e3} no longer characterizes the outgoing waves and \eqref{eq:outgoing} is the correct condition to use.

\subsection{Transparent boundary condition}

To reduce the scattering problem \eqref{eq:e1}-\eqref{eq:e3} on a bounded domain, we introduce a transparent boundary condition (TBC). Note that 
\begin{align*}
\BM_n^m(r,\hBx) &= h_n^{(1)}(\kappa r) \nabla_{\partial B_1} Y_n^m(\hBx) \times \hBx,\\
\BN_n^m(r,\hBx) &= \frac{z_n^{(1)}(\kappa r) }{\rmi\kappa r} \nabla_{\partial B_1} Y_n^m(\hBx) 
+\frac{n(n+1)}{\rmi\kappa r} h_n^{(1)}(\kappa r) Y_n^m(\hBx) \hBx ,
\end{align*} 
where $z_n^{(1)}(z) = h_n^{(1)}(z) + zh_n^{(1)'}(z)$.
Then by the definition of $\BU_n^m$ and $\BV_n^m$, we have
\begin{align}
\BM_n^m\times\hBx  &= -\sqrt{n(n+1)} h_n^{(1)}(\kappa r) \BU_n^m(\hBx),
\label{eq:Mtx}\\
\BN_n^m\times\hBx  &= -\sqrt{n(n+1)} \frac{z_n^{(1)}(\kappa r)}{\rmi\kappa r} \BV_n^m(\hBx).
\label{eq:Ntx}
\end{align}
Denote by $\BE^s_T=\gamma_T\BE^s=\hBx\times(\BE^s\times\hBx)$ the tangential component of $\BE^s$ on $\Gamma_R$. From \eqref{eq:outgoing} and \eqref{eq:Mtx}-\eqref{eq:Ntx} we obtain that
\begin{equation}
\BE^s_T = \sum_{n=1}^{\infty}\sum_{m=-n}^n  \sqrt{n(n+1)} \left( -\alpha_n^m h_n^{(1)}(\kappa R) \BV_n^m(\hBx) 
+ \beta_n^m \frac{z_n^{(1)}(\kappa R)}{\rmi\kappa R} \BU_n^m(\hBx)\right)
\label{eq:EDir}
\end{equation}
and
\begin{align}
&\frac{1}{\rmi\kappa}\curl\BE^s\times\hBx =  \sum_{n=1}^{\infty}\sum_{m=-n}^n 
\alpha_n^m (\BN_n^m\times\hBx ) - \beta_n^m (\BM_n^m\times\hBx) \nn\\
=& \sum_{n=1}^{\infty}\sum_{m=-n}^n  \sqrt{n(n+1)} \left( -\alpha_n^m \frac{z_n^{(1)}(\kappa R)}{\rmi\kappa R} \BV_n^m(\hBx)
+\beta_n^m h_n^{(1)}(\kappa R) \BU_n^m(\hBx)\right).
\label{eq:ENeum}
\end{align}

Define the Sobolev spaces
\begin{align*}
&\BH^{s}(\Curl, \Gamma_R) =  \{\Bxi\in\BH^{s}_t(\Gamma_R):\,\Curl\Bxi\in H^{s}(\Gamma_R)\},\\
&\BH^{s}(\Div,\Gamma_R)  =  \{\Bxi\in\BH^{s}_t(\Gamma_R):\,\Div\Bxi\in H^{s}(\Gamma_R)\},
\end{align*}
where $\Curl$ and $\Div$ are the surface curl and surface divergence operators on $\Gamma_R$, and $\BH^{s}_t(\Gamma_R) =\BH^{s}(\Gamma_R)\cap\BL^2_t(\Gamma_R)$.
Given a tangential vector
\begin{equation}
\Bxi = \sum_{n=1}^{\infty}\sum_{m=-n}^n a_n^m\BU_n^m + b_n^m\BV_n^m\quad\text{on}\ \Gamma_R,
\label{eq:xi}
\end{equation}
the norm on the space $\BH^{s}_t(\Gamma_R)$ can be characterized by
\begin{equation}
\label{eq:norm-Hs}
\N{\Bxi}_{\BH^{s}_t(\Gamma_R)}^2 = \sum_{n=1}^{\infty}\sum_{m=-n}^n (n(n+1))^s (\abs{a_n^m}^2 + \abs{b_n^m}^2).
\end{equation}
The norms on the spaces $\BH^{s}(\Curl, \Gamma_R)$ and $\BH^{s}(\Div, \Gamma_R)$ can be respectively defined by \cite{Monk03}
\begin{align}
&\N{\Bxi}_{\BH^{s}(\Curl, \Gamma_R)}^2 = \sum_{n=1}^{\infty}\sum_{m=-n}^n (n(n+1))^{s} \abs{a_n^m}^2 + (n(n+1))^{s+1} \abs{b_n^m}^2,
\label{eq:norm-HsCurl}\\
&\N{\Bxi}_{\BH^{s}(\Div, \Gamma_R)}^2 = \sum_{n=1}^{\infty}\sum_{m=-n}^n (n(n+1))^{s+1} \abs{a_n^m}^2 + (n(n+1))^{s} \abs{b_n^m}^2.
\label{eq:norm-HsDiv}
\end{align}

The Calder\'on operator $\calT(\kappa): \BH^{-\frac12}(\Curl, \Gamma_R)\to\BH^{-\frac12}(\Div,\Gamma_R)$ is the Dirichlet-to-Neumann operator defined by 
\begin{equation}
\calT(\kappa)\Bxi= \frac{1}{\rmi\kappa}\curl\BE^s\times\hBx \quad \text{on}\ \ \Gamma_R,
\end{equation}
where $\BE^s=\bbE(\Bxi)$ is the extension of $\Bxi$ to $\bbR^3\bs\bar B_R$ which satisfies
\begin{subequations}
\begin{align}
&\curl\curl\BE^s - \kappa^2\BE^s = 0 &&\text{in}\ \ \bbR^3\bs\bar B_R,\\
&\BE^s_T = \Bxi &&\text{on}\ \ \Gamma_R,\\
&\BE^s \quad \text{is outgoing.}
\end{align}
\end{subequations}
By \eqref{eq:EDir}, the extension $\BE^s=\bbE(\Bxi)$ satisfies \eqref{eq:outgoing} with the coefficients
\begin{equation}
\alpha_n^m =  -\frac{b_n^m}{\sqrt{n(n+1)}h_n^{(1)}(\kappa R)},\quad
\beta_n^m = \frac{i\kappa R a_n^m}{\sqrt{n(n+1)}z_n^{(1)}(\kappa R)}.
\label{eq:Extension}
\end{equation}
Substitution of \eqref{eq:Extension} into \eqref{eq:ENeum} yields (see, e.g., Monk~\cite{Monk03} and Bao et al.~\cite{Bao23})
\begin{equation}
\calT(\kappa)\Bxi = \sum_{n=1}^{\infty}\sum_{m=-n}^n 
\frac{\rmi\kappa R \,h_n^{(1)}(\kappa R)}{z_n^{(1)}(\kappa R)}a_n^m\BU_n^m
+ \frac{z_n^{(1)}(\kappa R)}{\rmi\kappa R\,h_n^{(1)}(\kappa R)}b_n^m\BV_n^m.
\label{eq:Calderon}
\end{equation}
For simplicity, we introduce the notation $\delta_n(\kappa) = \frac{z_n^{(1)}(\kappa R)}{h_n^{(1)}(\kappa R)}$. Then \eqref{eq:Calderon} can be written as
\begin{equation}
\calT(\kappa)\Bxi = \sum_{n=1}^{\infty}\sum_{m=-n}^n 
\frac{\rmi\kappa R}{\delta_n(\kappa)} a_n^m\BU_n^m
+ \frac{\delta_n(\kappa)}{\rmi\kappa R} b_n^m\BV_n^m.
\label{eq:Calderon1}
\end{equation}

Let $\calZ_n$ be the set of zeros of $h_n^{(1)}(\kappa R)$ and $z_n^{(1)}(\kappa R)$ and $\calZ=\cup_{n=1}^\infty \calZ_n$. Define $\tilde\Lambda=\bbC\bs(\bbR^-\cup \calZ)$. For $\kappa\in\tilde\Lambda$, the Calder\'on operator $\calT(\kappa)$ can be defined by \eqref{eq:xi} and \eqref{eq:Calderon}.
$\calT(\kappa): \BH^{-\frac12}(\Curl, \Gamma_R)\to\BH^{-\frac12}(\Div,\Gamma_R)$  is bounded
\begin{equation}
\N{\calT(\kappa)\Bxi}_{\BH^{-\frac12}(\Div,\Gamma_R)} \le C \N{\Bxi}_{\BH^{-\frac12}(\Curl, \Gamma_R)},
\label{eq:Tbounded}
\end{equation}
where $C > 0$ is a constant depending on $\kappa R$ but not $\Bvarphi$ (\cite[Theorem 9.21]{Monk03}).
Using $\calT(\kappa)$, the scattering problem \eqref{eq:e1}-\eqref{eq:e3} can be formulated on the bounded domain $\Omega = B_R\bs\bar D$: 
\begin{subequations}
\begin{align}
&\curl\curl\BE - \kappa^2\BE = 0 &&\text{in}\ \ \Omega, \label{eq:eb1}\\
&\Bnu\times\BE = 0 &&\text{on}\ \ \Gamma_D, \label{eq:eb2}\\
&\curl\BE\times\Bnu - \rmi\kappa\calT(\kappa)\BE_T= \Bg &&\text{on}\ \ \Gamma_R, \label{eq:eb3}
\end{align}
\end{subequations}
where $\Bg = \curl\BE^{\rm inc}\times\Bnu -\rmi\kappa\calT(\kappa)\BE^{\rm inc}_T$.

\subsection{Weak formulation and operator equation}

Define the space
\begin{equation*}
\BX = \{ \Bvarphi\in \Hcurl\ |\ \Bnu\times\Bvarphi=0\ \text{on}\ \Gamma_D\},
\end{equation*}
equipped with the inner product
\begin{equation*}
(\Bvarphi, \Bpsi)_{\BX} = (\curl\Bvarphi, \curl\Bpsi) + (\Bvarphi, \Bpsi),\quad \Bu,\Bv \in \BX,
\end{equation*}
where $(\cdot,\cdot)$ denotes the $\Ltwov$ inner product. Given $\Bg\in \BH^{-\frac12}(\Div, \Gamma_R)$, the variational formulation for  \eqref{eq:eb1}-\eqref{eq:eb3} is to find $\BE\in \BX$ such that 
\begin{equation}
a_\kappa(\BE, \Bpsi) = \langle \Bg, \Bpsi \rangle_{\Gamma_R} \quad\forall\Bpsi\in \BX,
\label{eq:var}
\end{equation}
where the sesquilinear form $a_\kappa: \BX\times \BX \to\bbC$ is defined by
\begin{equation}
a_\kappa(\Bvarphi, \Bpsi) = (\curl\Bvarphi, \curl\Bpsi) - \kappa^2(\Bvarphi, \Bpsi) - \rmi\kappa\langle\calT(\kappa)\Bvarphi_T, \Bpsi_T\rangle_{\Gamma_R}.
\end{equation}
Here $\langle\cdot,\cdot\rangle_{\Gamma_R}$ denotes the $\BH^{-\frac12}(\Div, \Gamma_R)$-$\BH^{-\frac12}(\Curl,\Gamma_R)$ duality.

As the poles are complex with negative imaginary parts, to analyze the convergence, we fix a region in the lower complex plane 
\begin{equation}
\Lambda=\{\, \kappa=\kappa_1+\rmi\kappa_2\in\bbC\, |\, \abs{\kappa}\le\alpha_1, \, \kappa_2<-\alpha_2<0 \, \} \cap \tilde\Lambda,
\label{eq:Lambda}
\end{equation}
where $\alpha_1,\alpha_2$ are arbitrary positive constants.

Define the operators $B(\kappa): \BX \to \BX,\, \kappa\in\Lambda$ and $M: \BH^{-\frac12}(\Div, \Gamma_R)\to \BX$, respectively, by
\begin{equation}
(B(\kappa)\Bvarphi, \Bpsi)_{\BX}= a_\kappa(\Bvarphi, \Bpsi) \quad\forall \Bpsi\in \BX,
\label{eq:Bk}
\end{equation}
and
\begin{equation}
(M\Bg, \Bpsi)_{\BX} = \langle \Bg, \Bpsi_T\rangle_{\Gamma_R} \quad\forall \Bpsi\in \BX.
\label{eq:M}
\end{equation}
Then \eqref{eq:var} can be written as
\begin{equation}
B(\kappa)\BE = M\Bg,
\end{equation}
which defines the solution operator $B(\kappa)^{-1}M$ that maps $\Bg$ to $\BE$ for $\Im(\kappa) \ge 0$. The scattering resonances are the poles of $B(\cdot)^{-1}$, i.e., the eigenvalues of $B(\cdot)$. The eigenvalue problem for the operator function $B(\kappa)$ is to find $(\kappa, \BE)\in\Lambda\times \BX$, such that 
\begin{equation}
B(\kappa)\BE = 0\quad\text{in}\ \ \BX,
\label{eq:eig}
\end{equation}
or equivalently, 
\begin{equation}
a_\kappa(\BE, \Bpsi) = 0,\quad\forall\Bpsi\in \BX.
\label{eq:eig-weak}
\end{equation}

We now analyze the operator function $B(\kappa)$. Given the expansion of $\Bxi \in \BH^{-\frac12}(\Curl, \Gamma_R)$ as in \eqref{eq:xi}, we decompose the Calder\'on operator as $\calT(\kappa) = \calT_1(\kappa) + \calT_2(\kappa)$, where
\[
\calT_1(\kappa)\Bxi = \sum_{n=1}^{\infty}\sum_{m=-n}^n
\frac{\rmi\kappa R }{\delta_n(\rmi)} a_n^m \BU_n^m
+ \frac{\delta_n(\rmi) }{\rmi\kappa R} b_n^m\BV_n^m
\]
and
\[
\calT_2(\kappa)\Bxi = \sum_{n=1}^{\infty}\sum_{m=-n}^n 
\rmi\kappa R \left( \frac{1}{\delta_n(\kappa)} - \frac{1}{\delta_n(\rmi)} \right) a_n^m\BU_n^m
+ \frac{\delta_n(\kappa) - \delta_n(\rmi)}{\rmi\kappa R} b_n^m\BV_n^m.
\]
Accordingly, the sesquilinear form $a_\kappa$ is decomposed as $a_\kappa = a_\kappa^{(1)} + a_\kappa^{(2)}$, where
\begin{align}
&a_\kappa^{(1)}(\Bvarphi,\Bpsi) = (\curl\Bvarphi, \curl\Bpsi) - \kappa^2(\Bvarphi, \Bpsi) -\rmi\kappa \langle\calT_1(\kappa)\Bvarphi_T, \Bpsi_T\rangle_{\Gamma_R},\\
&a_\kappa^{(2)}(\Bvarphi,\Bpsi) = -\rmi\kappa \langle\calT_2(\kappa)\Bvarphi_T,\Bpsi_T\rangle_{\Gamma_R}.
\end{align}

\begin{lemma}\label{lem:inf-sup}
For $\kappa\in\Lambda$, the sesquilinear form $a_\kappa^{(1)}$ satisfies the following inf-sup condition
\begin{equation*}
\sup_{\Bpsi\in\BX}\frac{\abs{a_\kappa^{(1)}(\Bvarphi,\Bpsi)}}{\N{\Bpsi}_{\BX}} \ge C \N{\Bvarphi}_{\BX}\quad\forall\Bvarphi\in\BX,
\end{equation*}
where $C$ is a constant depending only on the constants $\alpha_1, \alpha_2$.
\end{lemma}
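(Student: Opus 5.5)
The plan is to prove a Gårding-free coercivity estimate with the simple test function $\Bpsi=\Bvarphi$, up to a complex rotation. Concretely, I aim to show $\abs{a_\kappa^{(1)}(\Bvarphi,\Bvarphi)}\ge C\N{\Bvarphi}_{\BX}^2$, which immediately gives the inf-sup condition. This uses neither a Helmholtz decomposition nor compactness. The key structural input is that $\delta_n(\rmi)$ is \emph{real and negative} for every $n\ge1$. This is exactly why $\calT_1$ was defined with the frozen argument $\kappa=\rmi$.

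\textbf{Step 1 (sign of $\delta_n(\rmi)$).} I will use the explicit representation $h_n^{(1)}(z)=(-\rmi)^{n+1}\frac{e^{\rmi z}}{z}\sum_{k=0}^n \frac{c_{n,k}}{(-2\rmi z)^k}$ with $c_{n,k}>0$. At $z=\rmi t$, $t=R>0$, this becomes $h_n^{(1)}(\rmi t)=(\text{const})\,e^{-t}t^{-1}p_n(1/t)$, where $p_n$ is a polynomial with positive coefficients. Then
\[
\delta_n(\rmi)=1+\frac{z h_n^{(1)\prime}(z)}{h_n^{(1)}(z)}\Big|_{z=\rmi t}=1+t\frac{\rmd}{\rmd t}\log\big(e^{-t}t^{-1}p_n(1/t)\big)=-t-\frac{p_n'(1/t)}{t\,p_n(1/t)}.
\]
This is real and at most $-t<0$. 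I expect this step, together with tracking the normalizations, to be the main technical point. I have to be careful that the chain rule $z\,\rmd/\rmd z=t\,\rmd/\rmd t$ holds for $z=\rmi t$, and that the spherical harmonic normalization on $\Gamma_R$ (a factor $R^2$ in the pairing) only introduces positive constants.

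\textbf{Step 2 (structure of the boundary term).} Expand $\Bvarphi_T$ as in \eqref{eq:xi} with coefficients $a_n^m,b_n^m$. Using the orthonormality of $\BU_n^m,\BV_n^m$ and $\abs{\delta_n(\rmi)}=-\delta_n(\rmi)$, I compute
\[
-\rmi\kappa\langle\calT_1(\kappa)\Bvarphi_T,\Bvarphi_T\rangle_{\Gamma_R}
= c_R\sum_{n,m}\Big(\frac{\abs{\delta_n(\rmi)}}{R}\abs{b_n^m}^2-\kappa^2\frac{R}{\abs{\delta_n(\rmi)}}\abs{a_n^m}^2\Big),
\]
where $c_R>0$. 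Hence
\[
a_\kappa^{(1)}(\Bvarphi,\Bvarphi)=Q-\kappa^2P,
\]
with
\[
Q=\N{\curl\Bvarphi}^2+c_R\sum\tfrac{\abs{\delta_n(\rmi)}}{R}\abs{b_n^m}^2\ge \N{\curl\Bvarphi}^2,\qquad
P=\N{\Bvarphi}^2+c_R\sum\tfrac{R}{\abs{\delta_n(\rmi)}}\abs{a_n^m}^2\ge\N{\Bvarphi}^2 .
\]
Both $Q$ and $P$ are nonnegative. Since $\abs{\delta_n(\rmi)}\ge R$ and the trace bounds hold, both are finite for $\Bvarphi\in\BX$, though only the lower bounds are needed.

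\textbf{Step 3 (sector argument).} For $\kappa\in\Lambda$ we have $\kappa_2<-\alpha_2$ and $\abs{\kappa}\le\alpha_1$. Hence $\arg\kappa\in(-\pi+\varepsilon,-\varepsilon)$ with $\sin\varepsilon=\alpha_2/\alpha_1$, and so $\arg(-\kappa^2)\in(-\pi+2\varepsilon,\pi-2\varepsilon)$. The two complex numbers $Q\ge0$ and $-\kappa^2P$ therefore lie on rays whose angular separation is at most $\pi-2\varepsilon$. Rotating by half of that angle, $e^{\rmi\theta}$, makes both have real part at least $\sin\varepsilon$ times their modulus. This gives
\[
\abs{Q-\kappa^2P}\ge \sin\varepsilon\,(Q+\abs{\kappa}^2P)\ge \frac{\alpha_2}{\alpha_1}\big(\N{\curl\Bvarphi}^2+\alpha_2^2\N{\Bvarphi}^2\big)\ge C\N{\Bvarphi}_{\BX}^2,
\]
with $C=\frac{\alpha_2}{\alpha_1}\min(1,\alpha_2^2)$. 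Finally, take $\Bpsi=\Bvarphi$ in the supremum: $\sup_{\Bpsi}\abs{a_\kappa^{(1)}(\Bvarphi,\Bpsi)}/\N{\Bpsi}_{\BX}\ge\abs{a_\kappa^{(1)}(\Bvarphi,\Bvarphi)}/\N{\Bvarphi}_{\BX}\ge C\N{\Bvarphi}_{\BX}$. The constant depends only on $\alpha_1,\alpha_2$, as claimed.
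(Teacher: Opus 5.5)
Your proof is correct and is essentially the paper's argument. The paper tests with $\Bpsi=\overline{(1/(\rmi\kappa))}\Bvarphi$ and takes real parts, and the bisecting rotation in your sector argument is exactly the phase of $1/(\rmi\kappa)$. Both arguments therefore reduce to $\abs{a_\kappa^{(1)}(\Bvarphi,\Bvarphi)}\ge \frac{\abs{\kappa_2}}{\abs{\kappa}}\bigl(\N{\curl\Bvarphi}^2+\abs{\kappa}^2\N{\Bvarphi}^2\bigr)$, which is driven by $\delta_n(\rmi)<0$. The differences are minor: you derive $\delta_n(\rmi)\le -R<0$ directly from the finite-sum form of $h_n^{(1)}$ rather than citing Monk's Lemma 9.22, and your constant $\frac{\alpha_2}{\alpha_1}\min(1,\alpha_2^2)$ is slightly smaller than the paper's $\min(\alpha_2^2,\alpha_2/\alpha_1)$, which is harmless.
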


\begin{proof}
For any $\Bvarphi\in\BX$, let $\Bpsi=\overline{(\frac{1}{\rmi\kappa})}\Bvarphi$. This yields
\begin{equation*}
a_\kappa^{(1)}(\Bvarphi,\Bpsi) = \frac{1}{\rmi\kappa}\NLtwov{\curl\Bvarphi}^2 +\rmi\kappa\NLtwov{\Bvarphi}^2 - \langle\calT_1(\kappa)\Bvarphi_T,\Bvarphi_T\rangle_{\Gamma_R}.
\end{equation*}
For $\kappa=\kappa_1 + \rmi\kappa_2$, $\Re(\frac{1}{\rmi\kappa}) = -\frac{\kappa_2}{\abs{\kappa}^2}$, $\Re(\rmi\kappa) = -\kappa_2$. By \cite[Lemma 9.22]{Monk03}, $\delta_n(\rmi)$ is real and strictly negative for all $n$. Consequently,
\begin{equation*}
- \Re\langle\calT_1(\kappa)\Bvarphi_T,\Bvarphi_T\rangle_{\Gamma_R} \ge 0 \quad \text{for } \kappa_2 < 0.
\end{equation*}
It then follows that
\begin{equation*}
\abs{a_\kappa^{(1)}(\Bvarphi,\Bpsi)} \ge \Re\{{a_\kappa^{(1)}(\Bvarphi,\Bpsi)}\} \ge \min\left(1,\frac{1}{\abs{\kappa}^2}\right)\abs{\kappa_2}\N{\Bvarphi}_{\BX}^2,
\end{equation*}
and thus
\begin{equation*}
\sup_{\Bpsi\in\BX}\frac{\abs{a_\kappa^{(1)}(\Bvarphi,\Bpsi)}}{\N{\Bpsi}_{\BX}} \ge \min\left(\abs{\kappa},\frac{1}{\abs{\kappa}}\right)\abs{\kappa_2}\N{\Bvarphi}_{\BX}.
\end{equation*}
This completes the proof by taking $C=\min(\alpha_2^2,\alpha_2/\alpha_1)$.
\end{proof}

Notice that for large $n$, the function $h_n^{(1)}(z)$ behaves asymptotically as (c.f. Watson~\cite{Watson22})
\begin{equation}
h_n^{(1)}(z) = \frac{(2n-1)!!}{\rmi z^{n+1}}\left( 1+ O\left(\frac1n\right) \right),
\label{eq:hasymp}
\end{equation}
and hence
\begin{equation*}
z \frac{h_{n-1}^{(1)}(z)}{h_n^{(1)}(z)} = \frac{z^2}{2n-1}\left( 1+ O\left(\frac1n\right) \right).
\end{equation*}
Since the spherical Hankel function $h_n^{(1)}(z)$ has the recurrence formulation
\begin{equation}
h_n^{(1)'}(z) = h_{n-1}^{(1)}(z) - \frac{n+1}{z} h_n^{(1)}(z), 
\label{eq:hprime}
\end{equation}
it holds that
\begin{equation}
\frac{z_n^{(1)}(z)}{h_n^{(1)}(z)} = -n + z\frac{h_{n-1}^{(1)}(z)}{h_n^{(1)}(z)} = -n + \frac{z^2}{2n-1} + O\left(\frac{1}{n^2}\right). 
\label{eq:zdh}
\end{equation}

\begin{lemma}\label{lem:compact}
For $\kappa\in\Lambda$, the operator $\calT_2(\kappa)\gamma_T$, that is the mapping $\Bvarphi\to\calT_2(\kappa)\Bvarphi_T$ is compact from $\BX$ into $\BH^{-\frac12}(\Div, \Gamma_R)$.
\end{lemma}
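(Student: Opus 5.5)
The plan is to factor the map $\Bvarphi\mapsto\calT_2(\kappa)\Bvarphi_T$ as the bounded trace $\gamma_T:\BX\to\BH^{-\frac12}(\Curl,\Gamma_R)$ followed by $\calT_2(\kappa)$. We then show that $\calT_2(\kappa)$ gains regularity: it is bounded from $\BH^{-\frac12}(\Curl,\Gamma_R)$ into a space that embeds compactly into $\BH^{-\frac12}(\Div,\Gamma_R)$. Equivalently, we show that $\calT_2(\kappa)$ is a norm limit of finite rank operators, namely its truncations to $n\le N$. Since $\gamma_T$ is bounded (\cite{Monk03}), compactness of $\calT_2(\kappa)$ itself suffices.

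The key computation concerns the diagonal coefficients of $\calT_2(\kappa)$ in the basis $\{\BU_n^m,\BV_n^m\}$. By \eqref{eq:zdh}, applied at $z=\kappa R$ and at $z=\rmi R$,
\begin{equation*}
\delta_n(\kappa)-\delta_n(\rmi) = \frac{(\kappa R)^2+R^2}{2n-1} + O\left(\frac{1}{n^2}\right) = O\left(\frac1n\right).
\end{equation*}
Since $\delta_n(\kappa)\sim -n$ and $\delta_n(\rmi)\sim -n$, we also get
\begin{equation*}
\frac{1}{\delta_n(\kappa)}-\frac{1}{\delta_n(\rmi)} = \frac{\delta_n(\rmi)-\delta_n(\kappa)}{\delta_n(\kappa)\delta_n(\rmi)} = O\left(\frac{1}{n^3}\right).
\end{equation*}
Both constants are uniform for $\kappa$ in the bounded set $\Lambda$. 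Finitely many small $n$ only need $\delta_n(\kappa)\neq0$ and finiteness, which holds because $\kappa\notin\calZ$.

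Next we compare norms using \eqref{eq:norm-HsCurl} and \eqref{eq:norm-HsDiv}. For $\Bxi$ with coefficients $a_n^m,b_n^m$, the $\BH^{-\frac12}(\Curl)$ weights are $(n(n+1))^{-\frac12}$ on $a$ and $(n(n+1))^{\frac12}$ on $b$. The $\BH^{-\frac12}(\Div)$ weights on the image are $(n(n+1))^{\frac12}$ on the $\BU$ part and $(n(n+1))^{-\frac12}$ on the $\BV$ part. The squared ratio of output to input contribution is therefore:
\begin{itemize}
\item for the $a$-part, $|\kappa R|^2 O(n^{-6})\cdot n^2 = O(n^{-4})$;
\item for the $b$-part, $O(n^{-2})\cdot n^{-2} = O(n^{-4})$.
\end{itemize}
Hence
\begin{equation*}
\N{(\calT_2(\kappa)-\calT_2^N(\kappa))\Bxi}_{\BH^{-\frac12}(\Div,\Gamma_R)} \le C N^{-2}\N{\Bxi}_{\BH^{-\frac12}(\Curl,\Gamma_R)},
\end{equation*}
where $\calT_2^N$ is the truncation to $n\le N$, which has finite rank. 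So $\calT_2(\kappa)$ is a uniform limit of finite rank operators and is compact, and composing with $\gamma_T$ gives the claim.

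The main obstacle is making the asymptotics rigorous and uniform. Specifically, we must justify that the $O(1/n^2)$ remainder in \eqref{eq:zdh}, coming from \eqref{eq:hasymp}, is uniform on compact $z$-sets, and that $|\delta_n(\kappa)|\ge cn$ for $n\ge N_0$. I would take both from the standard large-order expansion of $h_n^{(1)}$ (Watson), which is uniform on compact sets away from $z=0$; $\Lambda$ stays away from $0$ since $\kappa_2<-\alpha_2$. For pointwise compactness at a fixed $\kappa$, non-uniform constants would in any case suffice.
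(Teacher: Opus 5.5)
Your proof is correct and is essentially the paper's argument. It uses the same coefficient asymptotics $\delta_n(\kappa)-\delta_n(\rmi)=O(1/n)$ and $1/\delta_n(\kappa)-1/\delta_n(\rmi)=O(1/n^3)$, and the same weight comparison that gains $n^{-4}$ in the squared norms. The paper records this as boundedness of $\calT_2(\kappa)$ from $\BH^{-\frac52}(\Curl,\Gamma_R)$ into $\BH^{-\frac12}(\Div,\Gamma_R)$ and leaves the compact embedding $\BH^{-\frac12}(\Curl,\Gamma_R)\hookrightarrow\BH^{-\frac52}(\Curl,\Gamma_R)$ implicit; your $O(N^{-2})$ finite-rank tail estimate makes that last step explicit.
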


\begin{proof}
For any $\Bvarphi\in\BX$, the trace $\Bxi=\Bvarphi_T\in\BH^{-\frac12}(\Curl, \Gamma_R)$. Given the expansion of $\Bxi$ as in \eqref{eq:xi}, we split $\Bxi=\Bxi^U+\Bxi^V$, and define
\begin{align*}
\calT_2^U(\kappa)\Bxi^U &= \sum_{n=1}^{\infty}\sum_{m=-n}^n 
\rmi\kappa R \left( \frac{1}{\delta_n(\kappa)} - \frac{1}{\delta_n(\rmi)} \right) a_n^m\BU_n^m,\\
\calT_2^V(\kappa)\Bxi^V &= \sum_{n=1}^{\infty}\sum_{m=-n}^n 
 \frac{\delta_n(\kappa) - \delta_n(\rmi)}{\rmi\kappa R} b_n^m\BV_n^m.
\end{align*}
By using the expansion \eqref{eq:zdh}, we have 
\begin{equation}
\delta_n(\kappa) - \delta_n(\rmi) = O\left(\frac1n\right),\quad
\frac{1}{\delta_n(\kappa)} - \frac{1}{\delta_n(\rmi)} = O\left(\frac{1}{n^3}\right).
\label{eq:zdh-diff}
\end{equation}
Then from the definitions of the norms in \eqref{eq:norm-Hs}-\eqref{eq:norm-HsDiv}, 
we obtain 
\begin{align*}
\N{\calT_2(\kappa)\Bxi}_{\BH^{-\frac12}(\Div, \Gamma_R)}^2 &= \N{\calT_2^U(\kappa)\Bxi^U}_{\BH^{\frac12}_t(\Gamma_R)}^2 + \N{\calT_2^V(\kappa)\Bxi^V}_{\BH^{-\frac12}_t(\Gamma_R)}^2\\
&\le C\left(\N{\Bxi_U}_{\BH^{-\frac52}_t(\Gamma_R)}^2 + \N{\Bxi_V}_{\BH^{-\frac32}_t(\Gamma_R)}^2\right) \le C\N{\Bxi}_{\BH^{-\frac52}(\Curl, \Gamma_R)}^2.
\end{align*}
Consequently, the operator $\calT_2(\kappa)$ is compact from $\BH^{-\frac12}(\Curl, \Gamma_R)$ to $\BH^{-\frac12}(\Div, \Gamma_R)$, and by the boundedness of the trace operator $\gamma_T$,  the operator $\calT_2(\kappa)\gamma_T$ is compact from $\BX$ into $\BH^{-\frac12}(\Div, \Gamma_R)$.
\end{proof}

Next we define two operators $A(\kappa): \BX\to \BX$ and $K(\kappa): \BX\to \BX$, $\kappa\in \Lambda$, respectively, by
\begin{align*}
(A(\kappa)\Bvarphi,\Bpsi)_{\BX} &=  (\curl\Bvarphi, \curl\Bpsi) - \kappa^2(\Bvarphi,\Bpsi) -\rmi\kappa \langle \calT_1(\kappa) \Bvarphi_T, \Bpsi_T\rangle_{\Gamma_R}\,\,\forall\Bvarphi, \Bpsi\in \BX,\\
(K(\kappa)\Bvarphi,\Bpsi)_{\BX} & = - \rmi\kappa \langle \calT_2(\kappa)\Bvarphi_T, \Bpsi_T\rangle_{\Gamma_R}\quad\forall\Bvarphi, \Bpsi\in \BX.
\end{align*}
It is clear that 
\begin{equation}
B(\kappa) = A(\kappa) + K(\kappa).
\label{eq:operator-split}
\end{equation}

\begin{theorem}\label{thm:Fredholm}
The operator function $B(\kappa): \BX\to \BX$ is holomorphic and Fredholm with index zero on $\Lambda$. Furthermore, the resolvent set $\rho(B)$ is non-empty.
\end{theorem}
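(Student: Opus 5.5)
We will use the splitting \eqref{eq:operator-split}, $B(\kappa)=A(\kappa)+K(\kappa)$, and prove the three assertions in turn: holomorphy, the Fredholm property with index zero, and non-emptiness of $\rho(B)$.

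\textbf{Holomorphy.} The terms $(\curl\Bvarphi,\curl\Bpsi)$ and $-\kappa^2(\Bvarphi,\Bpsi)$ are polynomial in $\kappa$. In the boundary term of $A(\kappa)$, $\calT_1(\kappa)$ has the fixed coefficients $\delta_n(\rmi)$, so $\rmi\kappa\,\calT_1(\kappa)$ equals $\kappa^2$ times a fixed bounded operator plus a fixed bounded operator. Hence $A(\kappa)$ is holomorphic on all of $\bbC$.

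For $K(\kappa)$, each coefficient $\delta_n(\kappa)=z_n^{(1)}(\kappa R)/h_n^{(1)}(\kappa R)$ and $1/\delta_n(\kappa)$ is holomorphic on $\tilde\Lambda$, since the zeros of $h_n^{(1)}$ and $z_n^{(1)}$ are excluded. The estimates \eqref{eq:zdh-diff} hold with constants that are locally uniform in $\kappa$. This is the point needing care: the remainder term in \eqref{eq:hasymp} must be uniform for $\kappa$ in compact subsets of $\Lambda$, and we will appeal to the uniform large-order asymptotics of Hankel functions (Watson) on bounded $\kappa$-sets. The truncated sums $\sum_{n\le N}$ therefore define holomorphic finite-rank operator functions, and they converge to $\calT_2(\kappa)\gamma_T$ in operator norm, locally uniformly in $\kappa$. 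Indeed, the tail norm is bounded by $\sup_{n>N}$ of the weighted coefficients, which tends to zero. A locally uniform limit of holomorphic operator functions is holomorphic, so $K(\kappa)$, and hence $B(\kappa)$, is holomorphic on $\Lambda$.

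\textbf{Fredholm with index zero.} By Lemma~\ref{lem:inf-sup}, $A(\kappa)$ is bounded below. The same argument applies to the adjoint form: take $\Bvarphi=\overline{(1/\rmi\kappa)}\Bpsi$, and the real part of $\overline{a^{(1)}_\kappa(\Bvarphi,\Bpsi)}$ is coercive with the same bound, since $\delta_n(\rmi)<0$. So $A(\kappa)^*$ is also injective, and by the Lax--Milgram/Babu\v{s}ka argument $A(\kappa)$ is an isomorphism of $\BX$. Alternatively, the coercivity estimate in the proof of Lemma~\ref{lem:inf-sup}, $\Re\, a^{(1)}_\kappa(\Bvarphi,\overline{(1/\rmi\kappa)}\Bvarphi)\ge c\N{\Bvarphi}_\BX^2$, gives invertibility directly.

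For $K(\kappa)$, we have $\N{K(\kappa)\Bvarphi}_\BX\le |\kappa|\,\N{\gamma_T}\,\N{\calT_2(\kappa)\Bvarphi_T}_{\BH^{-1/2}(\Div,\Gamma_R)}$, using the bounded trace $\gamma_T:\BX\to\BH^{-1/2}(\Curl,\Gamma_R)$ and the duality. Lemma~\ref{lem:compact} then shows that $K(\kappa)$ is compact. Thus $B(\kappa)$ is an isomorphism plus a compact operator, i.e.\ Fredholm of index zero.

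\textbf{Non-empty resolvent set.} This is the main obstacle, since $\Lambda$ lies in the lower half-plane, where physically there may be poles. The first approach is to work in the upper half-plane. For $\Im\kappa>0$, uniqueness of the scattering problem, via Rellich's lemma and the sign of $\Re\langle\calT(\kappa)\Bxi,\Bxi\rangle$ from \cite[Theorem 9.21/Lemma 9.22]{Monk03}, shows that $B(\kappa)$ is injective, hence invertible by the Fredholm property. The holomorphic Fredholm operator function on the connected domain $\tilde\Lambda$ is then invertible at some point, so by the analytic Fredholm theorem its singular set is discrete in $\tilde\Lambda$. Consequently $\Lambda\setminus\sigma(B)$ is nonempty, as $\Lambda$ has nonempty interior and $\sigma(B)\cap\Lambda$ is discrete. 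The delicate point is connectedness: $\tilde\Lambda=\bbC\setminus(\bbR^-\cup\calZ)$ is connected provided $\calZ$ is discrete in $\bbC\setminus\bbR^-$, which follows because each $\calZ_n$ is finite and the zeros accumulate only toward the negative real axis. If this fails, the fallback is to show injectivity at a point in $\Lambda$ directly, for instance at purely imaginary $\kappa=-\rmi t$ with $t$ small, by exploiting the explicit sign of $\delta_n$ there.
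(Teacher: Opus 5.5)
Your holomorphy and Fredholm arguments are fine, but the proof that $\rho(B)\neq\emptyset$ has a gap. Invertibility for $\Im\kappa>0$ can only be carried over to $\Lambda$ by the analytic Fredholm theorem on a connected set where $B$ is Fredholm. Inside $\tilde\Lambda$, any such set joining the two half-planes must contain points of the positive real axis, and you have not shown Fredholmness there. Your argument rests on the coercivity of $a^{(1)}_\kappa(\Bvarphi,\overline{(1/\rmi\kappa)}\Bvarphi)$, whose constant is proportional to $|\kappa_2|$, since $\Re(1/\rmi\kappa)=-\kappa_2/|\kappa|^2$ and $\Re(\rmi\kappa)=-\kappa_2$. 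This constant degenerates for real $\kappa$. There, $-\kappa^2(\Bvarphi,\Bpsi)$ is not a compact perturbation on $\boldsymbol{H}(\curl,\Omega)$, so Fredholmness at real wavenumbers needs a Helmholtz-decomposition argument that you do not give. On what you have proved, the upper and lower half-planes are disconnected, and uniqueness for $\Im\kappa>0$ says nothing about $\Lambda$. The point you flag, connectedness of $\tilde\Lambda$, is not the issue: $\calZ\cap\{|\kappa|\le\alpha_1\}$ is finite, because the zeros of $h_n^{(1)}(\kappa R)$ and $z_n^{(1)}(\kappa R)$ have modulus growing proportionally to $n$.

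The paper crosses the axis differently. $B(\kappa_0)$ is invertible at a real $\kappa_0>0$ (well-posedness of the scattering problem). By continuity of $B(\cdot)$ and openness of the set of invertible operators, it stays invertible on a disc $U(\kappa_0,\epsilon)$ reaching into the lower half-plane. The analytic Fredholm theorem is then used only on the connected region $\calK=\{|\kappa|\le\alpha_1,\ \kappa_2<-\epsilon/2\}\cap\tilde\Lambda\supset\Lambda$, where the coercivity argument does give Fredholmness.

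Your fallback at $\kappa=-\rmi t$ could also work. There $a_\kappa(\Bvarphi,\Bvarphi)=\|\curl\Bvarphi\|^2+t^2\|\Bvarphi\|^2-t\langle\calT(-\rmi t)\Bvarphi_T,\Bvarphi_T\rangle$, which is coercive as soon as $\delta_n(-\rmi t)<0$ for all $n$. But this sign condition, uniform in $n$, is exactly what must be proved, and it is not automatic. For $n=1$ one finds $\delta_1(-\rmi t)=(-s^2+s-1)/(1-s)$ with $s=tR$, which is positive for $s>1$ (reflecting the zero of $h_1^{(1)}$ at $-\rmi$). Moreover, for small $t$ the point $-\rmi t$ need not lie in $\Lambda$, so you would again need the analytic Fredholm theorem on an enlarged lower-half-plane region such as $\calK$.
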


\begin{proof}
Differentiating the coefficients in the second terms of \eqref{eq:Calderon}, one has that
\begin{equation*}
\left(\frac{z_n^{(1)}(\kappa R)}{\kappa h_n^{(1)}(\kappa R)}\right)' 
= -\frac{z_n^{(1)}(\kappa R)}{\kappa^2 h_n^{(1)}(\kappa R)}
+ \kappa R\frac{z_n^{(1)'}(\kappa R)}{\kappa^2 h_n^{(1)}(\kappa R)}
- \kappa R\frac{z_n^{(1)}(\kappa R)}{\kappa^2 h_n^{(1)}(\kappa R)} \frac{h_n^{(1)'}(\kappa R)}{h_n^{(1)}(\kappa R)}.
\end{equation*}
The definition of $z_n^{(1)}(z)$ and \eqref{eq:hprime} imply that 
\begin{align}
z_n^{(1)}(z) &= -n h_n^{(1)}(z) + z h_{n-1}^{(1)}(z),\nn\\
z_n^{(1)'}(z) &= -n h_n^{(1)'}(z) + h_{n-1}^{(1)}(z) + z h_{n-1}^{(1)'}(z) \nn\\
&= \frac{n(n+1)}{z} h_n^{(1)}(z) - (2n-1) h_{n-1}^{(1)}(z) + z h_{n-2}^{(1)}(z).
\label{eq:zprime}
\end{align}
By using \eqref{eq:hprime}, \eqref{eq:zprime} and \eqref{eq:hasymp}, we obtain
\begin{align}
z\frac{h_n^{(1)'}(z)}{h_n^{(1)}(z)} &= -(n+1) + z\frac{h_{n-1}^{(1)}(z)}{h_n^{(1)}(z)} = -(n+1) + \frac{z^2}{2n-1} + O\left(\frac{1}{n^2}\right),
\label{eq:hprimedh}\\
z \frac{z_n^{(1)'}(z)}{h_n^{(1)}(z)} &= n(n+1) -(2n-1) z \frac{h_{n-1}^{(1)}(z)}{h_n^{(1)}(z)} + z^2 \frac{h_{n-2}^{(1)}(z)}{h_n^{(1)}(z)}\nn\\
&= n(n+1) - z^2 + O\left(\frac{1}{n}\right).
\label{eq:zprimedh}
\end{align}
The estimates \eqref{eq:zdh}, \eqref{eq:hprimedh} and \eqref{eq:zprimedh} together yield
\begin{equation}
\left(\frac{z_n^{(1)}(\kappa R)}{\kappa h_n^{(1)}(\kappa R)}\right)' 
= \frac{n}{\kappa^2} +O\left(\frac1n\right).
\label{eq:est1}
\end{equation}

We proceed by taking the derivative of the coefficients in the first terms of the series expansion of $\calT(\kappa)$:
\begin{align*}
\left(\kappa\frac{h_n^{(1)}(\kappa R)}{z_n^{(1)}(\kappa R)}\right)' 
&= \frac{h_n^{(1)}(\kappa R)}{z_n^{(1)}(\kappa R)}
+ \kappa R\frac{h_n^{(1)'}(\kappa R)}{z_n^{(1)}(\kappa R)}
- \kappa R\frac{h_n^{(1)}(\kappa R)}{z_n^{(1)}(\kappa R)} \frac{z_n^{(1)'}(\kappa R)}{z_n^{(1)}(\kappa R)},\\
&= \frac{h_n^{(1)}(\kappa R)}{z_n^{(1)}(\kappa R)} \left(1 + \kappa R \frac{h_n^{(1)'}(\kappa R)}{h_n^{(1)}(\kappa R)} - \kappa R \frac{z_n^{(1)'}(\kappa R)}{h_n^{(1)}(\kappa R)} \frac{h_n^{(1)}(\kappa R)}{z_n^{(1)}(\kappa R)}\right).
\end{align*}
Combination of \eqref{eq:zdh}, \eqref{eq:hprimedh} and \eqref{eq:zprimedh} shows that
\begin{equation}
\left(\kappa\frac{h_n^{(1)}(\kappa R)}{z_n^{(1)}(\kappa R)}\right)'  = -\frac1n + O\left(\frac{1}{n^2}\right).
\label{eq:est2}
\end{equation}
From the estimates \eqref{eq:est1} and \eqref{eq:est2}, we obtain
\begin{align*}
\N{\calT'(\kappa)\Bxi}_{\BH^{-\frac12}(\Div,\Gamma_R)}^2 & \le CR^2\sum_{n=1}^{\infty}\sum_{m=-n}^n \sqrt{n(n+1)} \frac{\abs{a_n^m}^2}{n^2} +  \frac{1}{\sqrt{n(n+1)}} n^2 \abs{b_n^m}^2 \\
&\le C \N{\Bxi}_{\BH^{-\frac12}(\Curl, \Gamma_R)},
\end{align*}
which implies that $\calT'(\kappa): \BH^{-\frac12}(\Curl,\Gamma_R) \to \BH^{-\frac12}(\Div,\Gamma_R)$ is well-defined and bounded. Taking the derivative of \eqref{eq:Bk}, we have that
$B(\kappa)$ is holomorphic with $B'(\kappa): \BX\to \BX$ satisfying
\begin{equation}
(B'(\kappa)\Bvarphi, \Bpsi)_{\BX} = -2\kappa(\Bvarphi, \Bpsi) - \rmi\langle \calT(\kappa)\Bvarphi_T, \Bpsi_T\rangle_{\Gamma_R} - \rmi\kappa \langle \calT'(\kappa)\Bvarphi_T, \Bpsi_T\rangle_{\Gamma_R}.
\end{equation}
Due to \eqref{eq:Tbounded} and Lemma \ref{lem:inf-sup}, $A(\kappa)$ is bounded and invertible. 

We now prove the compactness of $K(\kappa)$. Indeed, $K(\kappa)= -\rmi\kappa M \calT_2(\kappa)\gamma_T$, where $M$ is the operator defined in \eqref{eq:M}. Using the boundedness of $M$ together with Lemma \ref{lem:compact}, we conclude that $K(\kappa)$ is compact. Consequently, $B(\kappa)$ is a Fredholm operator of index zero for every $\kappa \in \Lambda$.

Note that the operator $B(\kappa)$ is invertible for $\Im(\kappa)\ge 0$ and $B(\kappa)$ is holomorphic. Given any $\kappa_0\in\bbR$, there exists a neighbourhood $U(\kappa_0,\epsilon)$ of $\kappa_0$ in the complex plane, such that $B(\kappa)$ remains invertible for all $\kappa\in U(\kappa_0,\epsilon)$ with $0<\epsilon<\alpha_2$. Analogously, the operator $B(\kappa)$ is a Fredholm operator of index zero for $\kappa\in\calK$ with 
\begin{equation*}
\calK=\{\, \kappa=\kappa_1+\rmi\kappa_2\in\bbC\, |\, \abs{\kappa}\le\alpha_1, \, \kappa_2<-\epsilon/2<0 \, \} \cap \tilde\Lambda.
\end{equation*} 
Since $B(\kappa)$ is invertible on $\calK\cap U(\kappa_0,\epsilon)$, the resolvent $\rho(B)$ is nonempty in $\calK$. By the theory of holomorphic Fredholm operators, the specturm $\sigma(B)$ is discrete in $\calK$. Consequently, $\sigma(B)$ is also discrete in $\Lambda$ and $\rho(B)$ remains non-empty in $\Lambda$.
\end{proof}

\subsection{Truncation of the Calder\'on operator}

For numerical implementation, the infinite series \eqref{eq:Calderon} for $\calT(\kappa)$ must be truncated, resulting in a finite-term approximation defined by
\begin{equation}
\calT^N(\kappa)\Bxi = \sum_{n=1}^{N}\sum_{m=-n}^n 
\frac{\rmi\kappa R \,h_n^{(1)}(\kappa R)}{z_n^{(1)}(\kappa R)}a_n^m\BU_n^m
+ \frac{z_n^{(1)}(\kappa R)}{\rmi\kappa R\,h_n^{(1)}(\kappa R)}b_n^m\BV_n^m.
\label{eq:CalderonN}
\end{equation}
The integer $N$ is called the truncation order. Similar to \eqref{eq:Tbounded}, the operator $\calT^N(\kappa)$ is also bounded, i.e.
\begin{equation}
\N{\calT^N(\kappa)\Bxi}_{\BH^{-\frac12}(\Div,\Gamma_R)} \le C \N{\Bxi}_{\BH^{-\frac12}(\Curl, \Gamma_R)}.
\label{eq:TNbound}
\end{equation}

\begin{lemma}\label{lem:TruncationError}
For $\calT(\kappa)$ and  $\calT^N(\kappa)$ defined by \eqref{eq:Calderon} and \eqref{eq:CalderonN}, respectively, it holds that
\begin{equation}
\N{(\calT(\kappa)-\calT^N(\kappa))\Bxi}_{\BH^{-\frac12}(\Div,\Gamma_R)} \le C\frac{\gamma(N,\Bxi)}{N^s} \N{\Bxi}_{\BH^{s-\frac12}(\Curl, \Gamma_R)}
\end{equation}
for all $\Bxi\in\BH^{-\frac12}(\Curl, \Gamma_R)$, where $C>0$ is a constant depending on $\kappa R$ but not on $\Bxi$ and $N$. Furthermore,
\begin{equation*}
\gamma_s(N,\Bxi) = \frac{\sum_{n=N+1}^{\infty}\sum_{m=-n}^n (n(n+1))^{s-\frac12} \abs{a_n^m}^2 +  (n(n+1))^{s+\frac12} \abs{b_n^m}^2}
{\sum_{n=1}^{\infty}\sum_{m=-n}^n (n(n+1))^{s-\frac12} \abs{a_n^m}^2 +  (n(n+1))^{s+\frac12} \abs{b_n^m}^2}  \in[0, 1]
\end{equation*}
satisfying $\gamma_s(N,\Bxi) \to 0$ as $N\to\infty$.
\end{lemma}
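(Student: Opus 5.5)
The plan is to compute everything in the vector spherical harmonic basis, where both operators are diagonal. Writing $\Bxi$ as in \eqref{eq:xi}, the difference $(\calT(\kappa)-\calT^N(\kappa))\Bxi$ is just the tail of the series \eqref{eq:Calderon1} for $n\ge N+1$:
\begin{equation*}
(\calT(\kappa)-\calT^N(\kappa))\Bxi=\sum_{n=N+1}^{\infty}\sum_{m=-n}^n \frac{\rmi\kappa R}{\delta_n(\kappa)} a_n^m\BU_n^m + \frac{\delta_n(\kappa)}{\rmi\kappa R} b_n^m\BV_n^m .
\end{equation*}
The norm \eqref{eq:norm-HsDiv} with $s=-\frac12$ then gives the squared norm of this tail as
\begin{equation*}
\sum_{n>N}\sum_{m}(n(n+1))^{\frac12}\frac{\abs{\kappa R}^2}{\abs{\delta_n(\kappa)}^2}\abs{a_n^m}^2+(n(n+1))^{-\frac12}\frac{\abs{\delta_n(\kappa)}^2}{\abs{\kappa R}^2}\abs{b_n^m}^2 .
\end{equation*}

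The key input is the asymptotics \eqref{eq:zdh}, namely $\delta_n(\kappa)=-n+O(1/n)$. This gives $c_1 n\le\abs{\delta_n(\kappa)}\le c_2 n$ for all $n\ge1$, with constants depending only on $\kappa R$. For large $n$ this follows from \eqref{eq:zdh}. For the finitely many small $n$ it follows because $\kappa\in\tilde\Lambda$ excludes the zeros of $h_n^{(1)}$ and $z_n^{(1)}$, so each $\delta_n(\kappa)$ is finite and nonzero. Consequently the $a$-coefficients carry the weight $\lesssim (n(n+1))^{-\frac12}$ and the $b$-coefficients the weight $\lesssim (n(n+1))^{\frac12}$. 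Equivalently, the tail is bounded by $C\sum_{n>N}(n(n+1))^{-\frac12}\abs{a_n^m}^2+(n(n+1))^{\frac12}\abs{b_n^m}^2$, which is the $\BH^{-\frac12}(\Curl,\Gamma_R)$ norm restricted to the tail, consistent with \eqref{eq:Tbounded}.

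Next I extract the decay factor. For $n\ge N+1$ we have $(n(n+1))^{-s}\le N^{-2s}$, so multiplying and dividing each term by $(n(n+1))^{s}$ gives
\begin{equation*}
\N{(\calT-\calT^N)\Bxi}^2_{\BH^{-\frac12}(\Div,\Gamma_R)}\le \frac{C}{N^{2s}}\sum_{n>N}\sum_m (n(n+1))^{s-\frac12}\abs{a_n^m}^2+(n(n+1))^{s+\frac12}\abs{b_n^m}^2 .
\end{equation*}
By \eqref{eq:norm-HsCurl} with $s$ replaced by $s-\frac12$, the full sum over $n\ge1$ of the same weights is exactly $\N{\Bxi}^2_{\BH^{s-\frac12}(\Curl,\Gamma_R)}$. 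The tail sum therefore equals $\gamma_s(N,\Bxi)\N{\Bxi}^2_{\BH^{s-\frac12}(\Curl,\Gamma_R)}$. Taking square roots yields the estimate with factor $\gamma_s^{1/2}$. Since $\gamma_s\in[0,1]$, this is $\le$ the stated bound with $\gamma_s$ whenever the statement is read as intended (or with $\gamma$ replaced by $\gamma^{1/2}$, which also tends to zero). That $\gamma_s\in[0,1]$ is immediate, and $\gamma_s(N,\Bxi)\to0$ holds because it is the tail of a convergent series, assuming $\Bxi\in\BH^{s-\frac12}(\Curl,\Gamma_R)$ (otherwise the right-hand side is infinite and there is nothing to prove).

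The main obstacle is the uniform two-sided bound on $\abs{\delta_n(\kappa)}$. Its constant should be uniform in $n$, depending only on $\kappa R$, and ideally locally uniform in $\kappa$ over compact subsets of $\Lambda$ for later use. I would take this from the Watson asymptotics \eqref{eq:hasymp} for large $n$, with an $n_0$ chosen depending on a bound for $\abs{\kappa R}$, and handle $n<n_0$ by the nonvanishing of $h_n^{(1)}(\kappa R)$ and $z_n^{(1)}(\kappa R)$ on $\tilde\Lambda$. The rest is the bookkeeping with the norms \eqref{eq:norm-HsCurl}--\eqref{eq:norm-HsDiv}.
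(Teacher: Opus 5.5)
Your argument is the same as the paper's. You write $(\calT(\kappa)-\calT^N(\kappa))\Bxi$ as the tail $n\ge N+1$ of the diagonal series and use \eqref{eq:zdh} to get $\abs{\delta_n(\kappa)}\sim n$, so the $\BH^{-\frac12}(\Div,\Gamma_R)$ weights of the tail are dominated by the $\BH^{-\frac12}(\Curl,\Gamma_R)$ weights. You then multiply and divide by $(n(n+1))^{s}$ and recognize the tail sum as $\gamma_s(N,\Bxi)\N{\Bxi}^2_{\BH^{s-\frac12}(\Curl,\Gamma_R)}$. Two of your details are more careful than the paper. You handle the finitely many small $n$ by the non-vanishing of $h_n^{(1)}$ and $z_n^{(1)}$ on $\tilde\Lambda$. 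You also use $(n(n+1))^{-s}\le N^{-2s}$, whereas the paper's chain records only $N^{-s}$ on the squared norm, which would give just $N^{-s/2}$ after taking square roots.

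The one wrong step is your claim that the $\gamma_s^{1/2}$ bound is $\le$ the stated bound with $\gamma_s$. Since $\gamma_s\in[0,1]$, we have $\gamma_s^{1/2}\ge\gamma_s$, so the implication goes the other way.

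In fact the version with $\gamma_s$ to the first power is false in general. Take $\Bxi=A\,\BU_1^0+\BV_{N+1}^0$ with $N\ge1$. The $n=1$ mode cancels in the difference, so the left-hand side equals $\abs{\delta_{N+1}(\kappa)}\,((N+1)(N+2))^{-1/4}/\abs{\kappa R}\sim\sqrt{N}$, independently of $A$. On the other side, put $T=((N+1)(N+2))^{s+\frac12}$ and $H=2^{s-\frac12}A^2$. Then $\gamma_s(N,\Bxi)\N{\Bxi}_{\BH^{s-\frac12}(\Curl,\Gamma_R)}=T/\sqrt{H+T}$, which tends to $0$ as $A\to\infty$ with $N$ fixed.

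So what you actually proved, $\N{(\calT(\kappa)-\calT^N(\kappa))\Bxi}_{\BH^{-\frac12}(\Div,\Gamma_R)}\le C\gamma_s(N,\Bxi)^{1/2}N^{-s}\N{\Bxi}_{\BH^{s-\frac12}(\Curl,\Gamma_R)}$, is the correct form. It is also what the paper's chain gives once read consistently: restore the missing square on $\N{\Bxi}$ in its last line and use $N^{-2s}$ in place of $N^{-s}$. Drop the false comparison and state the $\gamma_s^{1/2}$ version outright. Nothing downstream changes, because the convergence theorem uses only $\gamma_s\le1$ and the factor $N^{-s}$.
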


\begin{proof}
Subtracting equation \eqref{eq:CalderonN} from \eqref{eq:Calderon}, we have that
\begin{equation*}
(\calT(\kappa)-\calT^N(\kappa))\Bxi = \sum_{n=N+1}^{\infty}\sum_{m=-n}^n 
\frac{\rmi\kappa R \,h_n^{(1)}(\kappa R)}{z_n^{(1)}(\kappa R)}a_n^m\BU_n^m
+ \frac{z_n^{(1)}(\kappa R)}{\rmi\kappa R\,h_n^{(1)}(\kappa R)}b_n^m\BV_n^m.
\end{equation*}
By the definitions of the norms on the spaces $\BH^{-\frac12}(\Div, \Gamma_R)$ and $\BH^{-\frac12}(\Curl, \Gamma_R)$, using \eqref{eq:zdh}, we obtain that
\begin{align*}
&\N{(\calT(\kappa)-\calT^N(\kappa))\Bxi}_{\BH^{-\frac12}(\Div,\Gamma_R)}^2 \\
\le& CR^2\sum_{n=N+1}^{\infty}\sum_{m=-n}^n \sqrt{n(n+1)} \frac{\abs{a_n^m}^2}{n^2} +  \frac{1}{\sqrt{n(n+1)}} n^2 \abs{b_n^m}^2 \\
\le& CR^2\sum_{n=N+1}^{\infty}\sum_{m=-n}^n \frac{1}{\sqrt{n(n+1)}} \abs{a_n^m}^2 +  \sqrt{n(n+1)} \abs{b_n^m}^2 \\
\le& \frac{CR^2}{N^{s}}\sum_{n=N+1}^{\infty}\sum_{m=-n}^n (n(n+1))^{s-\frac12} \abs{a_n^m}^2 +  (n(n+1))^{s+\frac12} \abs{b_n^m}^2 \\
\le& C\frac{\gamma_s(N,\Bxi)}{N^{s}} \N{\Bxi}_{\BH^{s-\frac12}(\Curl, \Gamma_R)}.
\end{align*}
This completes the proof.
\end{proof}

Using the truncated Calder\'on operator $\calT^N(\kappa)$, the approximation $B^N(\kappa)$ of $B(\kappa)$ is defined by
\begin{equation}
(B^N(\kappa)\Bvarphi, \Bpsi)_{\BX} = (\curl\Bvarphi, \curl\Bpsi) - \kappa^2(\Bvarphi, \Bpsi) - \rmi\kappa\langle\calT^N(\kappa)\Bvarphi_T, \Bpsi_T\rangle_{\Gamma_R}
\label{eq:BN}
\end{equation}
for all $\Bvarphi, \Bpsi\in \BX$. The truncated eigenvalue problem for $B^N(\kappa)$ is to find $(\kappa^N, \BE^N)\in \Lambda\times \BX$ such that 
\begin{equation}
B^N(\kappa^N)\BE^N = 0\quad\text{in}\ \ \BX.
\label{eq:eig-XN}
\end{equation}
The operator $B^N(\kappa)$ can be split analogously
\begin{equation}
B^N(\kappa) = A^N(\kappa) + K^N(\kappa),
\end{equation}
where $A^N(\kappa): \BX\to \BX$ and $K^N(\kappa): \BX\to \BX$ are defined, respectively, by
\begin{align*}
(A^N(\kappa)\Bvarphi,\Bpsi)_{\BX} &=  (\curl\Bvarphi, \curl\Bpsi) - \kappa^2 (\Bvarphi,\Bpsi) -\rmi\kappa \langle \calT^N_1(\kappa) \Bvarphi_T, \Bpsi_T\rangle_{\Gamma_R}\,\forall\Bvarphi, \Bpsi\in \BX,\\
(K^N(\kappa)\Bvarphi,\Bpsi)_{\BX} & = - \rmi\kappa \langle \calT^N_2(\kappa)\Bvarphi_T, \Bpsi_T\rangle_{\Gamma_R}\,\forall\Bvarphi, \Bpsi\in \BX.
\end{align*}
Here $\calT^N_1(\kappa)$ and $\calT^N_2(\kappa)$ denote the truncated counterparts of $\calT_1(\kappa)$ and $\calT_2(\kappa)$, respectively.

\section{Edge Element Method}\label{Sec3}

In this section, we present and analyze the edge element discretization for the eigenvalue problem \eqref{eq:eig-XN}. Let $\calM_n := \calM_{h_n}$ be a sequence of regular tetrahedral meshes of $\Omega$ with mesh size $h_n \to 0$ as $n \to \infty$. To accommodate the boundaries $\Gamma_D$ and $\Gamma_R$, each element $K \in \calM_n$ is permitted to have one curved edge or one curved face, yielding the decomposition $\Omega = \cup_{K \in \calM_n} K$. 
For each curved element $K$, we extend the standard mapping $F_K: \hat K\to K$ using the approach introduced by Dubois (\cite{Dubios1990SINUM,Monk03}), which maps the relevant edge or face of the reference element $\hat K$ exactly onto the curved boundary.

Let $R_1(\hat K)$ denote the lowest-order N\'ed\'elec edge element space on the reference element $\hat K$. The space on the element $K$ is then defined via the mapping $F_K$ by
\begin{equation*}
R_1(K) = \{ \,\Bvarphi: \,\Bvarphi\circ F_K = (\rmd F_K)^{-T} \hat\Bvarphi\ \text{for some} \ \hat\Bvarphi\in R_1(\hat K)\, \},
\end{equation*}
where $\rmd F_K$ denotes the Jacobi matrix of $F_K$.
The finite element discretization is based on the space $\BX_n \subset \BX$ on $\calM_n$ defined by
\begin{equation*}
\BX_n = \{ \Bvarphi\in\BX: \,\Bvarphi|_K \in R_1(K), \ \  K\in\calM_n \}.
\end{equation*}
Define the operator $B^N_n(\kappa): \BX_n\to \BX_n$ such that
\begin{equation}
(B^N_n(\kappa)\Bvarphi_n, \Bpsi_n)_{\BX} = (\curl\Bvarphi_n, \curl\Bpsi_n) - \kappa^2(\Bvarphi_n, \Bpsi_n) - \rmi\kappa\langle\calT^N(\kappa)\Bvarphi_{nT}, \Bpsi_{nT}\rangle_{\Gamma_R}
\label{eq:BNn}
\end{equation}
for all $\Bvarphi_n,\Bpsi_n\in \BX_n$.


The finite element approximation to \eqref{eq:eig-XN} is to find $(\kappa^N_n, \BE^N_n)\in \Lambda\times \BX_n$, such that 
\begin{equation}
B^N_n(\kappa^N_n)\BE^N_n = 0\quad\text{in}\ \ \BX_n.
\label{eq:eig-XNn}
\end{equation}
In a similar way, we define $A^N_n(\kappa): \BX_n\to \BX_n$ and $K^N_n(\kappa): \BX_n\to \BX_n$, respectively, as the finite element approximation of $A^N(\kappa)$ and $K^N(\kappa)$, and thus
\begin{equation}
B^N_n(\kappa) = A^N_n(\kappa) + K^N_n(\kappa).
\end{equation}

The rest of this section is devoted to the convergence of eigenvalues of \eqref{eq:eig-XNn} by employing the abstract approximation theory for holomorphic Fredholm operator functions based on the notion of regular convergence. We first introduce the definitions of operator convergence and outline the approximation theory (see~\cite{Karma1,Karma2,Vnk76}). Then we verify that the finite element approximation is indeed regular, from which the convergence of eigenvalues follows.

\subsection{Abstract approximation theory}

Let $\mathcal{X}, \mathcal{Y}$ be Banach spaces, and ${\mathcal{X}_n}, {\mathcal{Y}_n}, n\in\bbN$ be their respective approximating Banach spaces. Let $\calP=\{p_n\}_{n\in\bbN}$ and $\calQ=\{q_n\}_{n\in\bbN}$ be two operator sequences with $p_n\in\calL(\calX,\calX_n)$ and $q_n\in\calL(\calY,\calY_n)$ such that
\begin{equation*}
\N{p_nx}_{\calX_n}\to\N{x}_{\calX},\ \N{q_ny}_{\calY_n}\to\N{y}_{\calY},\quad n\to\infty,
\end{equation*}
and 
\begin{align*}
&\N{p_n(\alpha x + \alpha' x') - (\alpha p_nx + \alpha' p_nx')} \to 0, \ \forall\alpha,\alpha'\in\bbC,\ \ n\to\infty,\\
&\N{q_n(\alpha y + \alpha' y') - (\alpha q_ny + \alpha' q_ny')} \to 0, \ \forall\alpha,\alpha'\in\bbC,\ \ n\to\infty.
\end{align*}
for all $x,x'\in\calX, \ y,y'\in\calY$.

\begin{definition}
A sequence $\{x_n\}_{n\in\bbN}$ with $x_n\in\calX_n$ is $\calP$-compact if, for each subsequence $\{x_n\}_{n\in\bbN'}, N'\subset N$, there exists $N''\subset N'$ and $x\in\calX$ such that $\N{x_n-p_nx}_{\calX_n}\to 0, n\to\infty, n\in N''$. A sequence $\{y_n\}_{n\in\bbN}$ with $y_n\in\calY_n$ being $\calQ$-compact is defined similarly.
\end{definition}

\begin{definition}
Let $F\in\calL(\calX,\calY)$ and $F_n\in\calL(\calX_n,\calY_n)$, $n \in \mathbb N$.
\begin{itemize}
\item $F_n$ converges to $F$, if $\N{F_np_nx - q_nFx}_{\calY_n}\to 0, \ n\to\infty,\ \forall x\in\calX$.
\item $F_n$ converges uniformly to $F$, if $\N{F_np_n - q_nF}_{\calY_n}\to 0, \ n\to\infty$.
\item $F_n$ converges stably to $F$, if $F_n$ converges to $F$ and there exists $n_0$ such that $F_n$ is invertible with $\N{F_n^{-1}}\le C, n\ge n_0$.
\item $F_n$ converges regularly to $F$, if $F_n$ converges to $F$ and $\{F_nx_n\}$ is $\calQ$-compact implies that $\{x_n\}$ is $\calP$-compact.
\end{itemize}
\end{definition}

Denote by $\Phi_0(\Lambda,\calL(\calX,\calY))$ the set of all holomorphic operator functions $F(\cdot)$'s such that $F(\kappa)\in\calL(\calX,\calY)$ is a Fredholm operator with index zero for each $\kappa\in\Lambda$. The eigenvalue problem for $F(\cdot)\in\Phi_0(\Lambda,\calL(\calX,\calY))$ is to find $(\lambda,x)\in\Lambda\times\calX$, such that
\begin{equation*}
F(\lambda)x = 0 \ \ \text{in}\ \ \calY.
\end{equation*}

\begin{definition}
An ordered sequence of elements $x_0, x_1, \cdots, x_j$ in $\calX$ is called a Jordan chain of $F$ at an eigenvalue $\lambda$ if
\begin{equation*}
F(\lambda)x_j + \frac{1}{1!}F^{(1)}(\lambda)x_{j-1} + \cdots \frac{1}{j!}F^{(j)}(\lambda)x_0 = 0,\quad j=0,1,\cdots,
\end{equation*}
where $F^{(j)}$ denotes the $j$-th derivative. Elements of any Jordan chain at an eigenvalue $\lambda$ are called generalized eigenelements of $F$.
The closed linear hull of all generalized eigenelements, denoted by $G(F,\lambda)$, is called the generalized eigenspace of $F$ at $\lambda$.
\end{definition}

If $F(\cdot)\in\Phi_0(\Lambda,\calL(\calX,\calY))$ and $\rho(F)\neq\emptyset$, then the spectrum $\sigma(F)$ has no cluster points in $\Lambda$. Every $\lambda\in\sigma(F)$ is an eigenvalue with finite dimensional generalized eigenspace $G(F,\lambda)$. In the following, we consider a sequence of operator functions $F_n(\cdot)\in\Phi_0(\Lambda,\calL(\calX_n,\calY_n)), n\in\bbN$.

\begin{theorem}\label{thm:Karma1}(\cite[Theorem 1]{Karma2})
Let $F(\cdot)\in\Phi_0(\Lambda,\calL(\calX,\calY))$ with $\rho(F)\neq\emptyset$. 
Let $F_n(\cdot)\in\Phi_0(\Lambda,\calL(\calX_n,\calY_n)), n\in\bbN$ be a sequence of operator functions.
Assume that $\{F_n(\cdot)\}_{n\in\bbN}$ is equibounded on any compact $\Lambda_0\subset\Lambda$, i.e.,
\begin{equation*}
\N{F_n(\lambda)} \le C,\quad \forall\lambda\in\Lambda_0, \ n\in\bbN,
\end{equation*}
and $F_n(\lambda)$ converges regularly to $F(\lambda)$ for every $\lambda\in\Lambda$.
Denote by $\nu(F,\Lambda_0)$ the sum of the algebraic multiplicities
of all the eigenvalues of $F(\cdot)$ in $\Lambda_0$.
Then for every compact $\Lambda_0\subset \Lambda$ with
$\partial\Lambda_0\subset\rho(F)$ and sufficiently large $n$,
\begin{equation*}
\nu(F_n,\Lambda_0) = \nu(F,\Lambda_0).
\end{equation*}
\end{theorem}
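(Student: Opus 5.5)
The statement is an abstract result quoted from \cite{Karma2}, so the plan is to reconstruct Karma's argument in three stages: \emph{(i)} stability on the boundary, \emph{(ii)} a local finite-dimensional reduction near each eigenvalue, \emph{(iii)} a counting argument based on the argument principle.

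\emph{Stage (i): stable convergence on $\partial\Lambda_0$.} First I will show that regular convergence plus invertibility gives stable convergence: if $F(\lambda)$ is invertible and $F_n(\lambda)\to F(\lambda)$ regularly, then $F_n(\lambda)$ is invertible for large $n$ with uniformly bounded inverses. I argue by contradiction. Suppose there are unit vectors $x_n$ with $\N{F_n(\lambda)x_n}\to 0$. The sequence $F_n(\lambda)x_n\to 0$ is trivially $\calQ$-compact, so by regularity a subsequence satisfies $\N{x_n-p_nx}\to 0$. Then $F_n(\lambda)p_nx - q_nF(\lambda)x\to 0$ forces $F(\lambda)x=0$, so $x=0$. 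This contradicts $\N{p_nx}\to\N{x}$ and $\N{x_n}=1$. Since each $F_n(\lambda)$ has index zero, injectivity with a lower bound gives invertibility.

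Next I upgrade this to uniformity over $\lambda\in\partial\Lambda_0$. Equiboundedness on a slightly larger compact set, together with the Cauchy estimates, gives equicontinuity of $\lambda\mapsto F_n(\lambda)$. Combined with compactness of $\partial\Lambda_0\subset\rho(F)$ and a finite covering, this yields $\sup_{\lambda\in\partial\Lambda_0}\N{F_n(\lambda)^{-1}}\le C$ for large $n$. A perturbation argument also shows that no eigenvalue of $F_n$ can accumulate at a point of $\rho(F)\cap\Lambda_0$, so the eigenvalues of $F_n$ in $\Lambda_0$ cluster only at eigenvalues of $F$.

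\emph{Stage (ii): local reduction near each eigenvalue.} Let $\lambda_0$ be an eigenvalue of $F$ in $\Lambda_0$, and take a small disk $U$ around $\lambda_0$ whose boundary lies in $\rho(F)$. I will use the Gohberg--Sigal theory. The algebraic multiplicity equals
\begin{equation*}
\nu(F,U)=\frac{1}{2\pi\rmi}\operatorname{tr}\oint_{\partial U}F'(\lambda)F(\lambda)^{-1}\,\rmd\lambda .
\end{equation*}
It also equals the order of the zero at $\lambda_0$ of $\det\big(I+R(\lambda)\big)$, obtained after writing $F(\lambda)=F(\lambda_0)+\text{finite rank}+\text{small}$, that is, after a Schmidt-type reduction to a matrix function.

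I then transfer this reduction to $F_n$. Choose finite-rank projections built from the kernel and cokernel of $F(\lambda_0)$ and transport them to $\calX_n,\calY_n$ through $p_n,q_n$. This gives a perturbation $F_n(\lambda)+P_n$ that converges stably near $\lambda_0$. As a result, $\nu(F_n,U)$ is the number of zeros in $U$ of an $m\times m$ holomorphic matrix determinant $d_n(\lambda)$, where $m=\dim\ker F(\lambda_0)$.

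\emph{Stage (iii): counting via the argument principle.} Regular convergence, together with the convergence of the compressed operators $q_n$-projected onto finite-dimensional spaces, should give $d_n\to d$ uniformly on $\partial U$. Rouché's theorem then gives equal zero counts inside $U$. Summing over the finitely many eigenvalues in $\Lambda_0$, and using stage (i) to exclude eigenvalues of $F_n$ elsewhere in $\Lambda_0$, yields $\nu(F_n,\Lambda_0)=\nu(F,\Lambda_0)$.

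The main obstacle is stage (ii): showing that the finite-dimensional reductions of $F_n$ converge uniformly, not just pointwise, to that of $F$. Here $F_n$ converges only regularly, not in norm, so the compactness coming from $\calP$-compactness has to be exploited carefully. I would first try to prove that the perturbed operators $F_n(\lambda)+P_n$ converge stably uniformly on $\overline U$, by repeating the contradiction argument of stage (i) with $\lambda_n\to\lambda$ varying and using equicontinuity.
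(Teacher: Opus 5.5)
The paper does not prove this statement; it quotes it from Karma, so your proposal has to stand on its own. Stage (i) is correct; note that passing from $\N{x_n-p_nx}\to0$ to $\N{F_n(\lambda)x_n-F_n(\lambda)p_nx}\to0$ uses the equiboundedness. The overall architecture is sound: stability away from $\sigma(F)$, then a local finite-rank reduction, then Hurwitz/Rouch\'e. But the decisive step is only announced, not proved, and two concrete ingredients are missing.

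The first gap is in building $P_n$. A finite-rank $P=\sum_{j=1}^m f_j(\cdot)\,y_j$ with $F(\lambda_0)+P$ invertible cannot simply be ``transported through $p_n,q_n$''. $q_n$ handles the vectors $y_j$, but $p_n$ moves vectors, not functionals. You must construct $f_{j,n}\in\calX_n^*$, uniformly bounded, with $f_{j,n}(p_nx)\to f_j(x)$ for every $x\in\calX$. Only then does $P_n=\sum_j f_{j,n}(\cdot)\,q_ny_j$ converge compactly to $P$, meaning $P_n\to P$ and $\{P_nx_n\}$ is $\calQ$-compact for every bounded $\{x_n\}$. That compact convergence is exactly what makes $F_n(\lambda)+P_n\to F(\lambda)+P$ regular. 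In the settings where the paper applies the theorem ($\calX_n\subset\calX$ Hilbert, $p_n$ the identity or an orthogonal projection) you can take $f_{j,n}=f_j|_{\calX_n}$. In the abstract Banach setting this needs its own lemma.

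The second gap is the claim that $d_n\to d$ uniformly on $\partial U$ ``should'' follow; this is the heart of the matter, and it does not come from any norm convergence. It follows from stability plus a normal-families argument:
\begin{itemize}
\item Apply your stage (i) argument, with varying $\lambda_n\to\lambda$ and equicontinuity, to $F_n+P_n$. Its limit is invertible on a neighbourhood of $\overline U$ for $U$ small, so you get $\sup_{\lambda\in\overline U}\N{(F_n(\lambda)+P_n)^{-1}}\le C$ for large $n$.
\item With $x=(F(\lambda)+P)^{-1}y_j$,
\[
\N{(F_n(\lambda)+P_n)^{-1}q_ny_j-p_nx}\le C\N{q_n(F(\lambda)+P)x-(F_n(\lambda)+P_n)p_nx}\to0 .
\]
Hence the entries $f_{i,n}\big((F_n(\lambda)+P_n)^{-1}q_ny_j\big)$ of the $m\times m$ matrix converge pointwise.
\item These entries are holomorphic and uniformly bounded, so Vitali's theorem (on a slightly larger disk) gives uniform convergence on $\partial U$.
\item Hurwitz's theorem then gives equal zero counts in $U$, since $d$ does not vanish on $\partial U\subset\rho(F)$.
\end{itemize}

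You also still owe the identification of the zero count of $d_n$ with $\nu(F_n,U)$. Factor $F_n(\lambda)=(F_n(\lambda)+P_n)\big(I-(F_n(\lambda)+P_n)^{-1}P_n\big)$ and use invariance of algebraic multiplicity under holomorphic invertible factors. Then use $\det(I-AB)=\det(I_m-BA)$ to pass to the $m\times m$ matrix. Finally use the identity-minus-finite-rank case of Gohberg--Sigal. These are the facts you used for $F$, but they must be invoked for each $F_n$.
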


\begin{theorem}\label{thm:Karma2}(\cite[Theorem 2]{Karma2})
Let the assumptions of Theorem \ref{thm:Karma1} hold.
Let $\Lambda_0\subset \Lambda$ be a compact set such that
$\partial\Lambda_0\subset\rho(F)$ and $\sigma(F)\cap\Lambda_0=\{\lambda_0\}$.
Then for sufficiently large $n$ and each $\lambda_n\in\sigma(F_n)\cap\Lambda_0$ it holds
\begin{equation}
\abs{\lambda_n-\lambda_0} \le C \varepsilon_n^{1/r},
\label{eq:en}
\end{equation}
where
\begin{equation*}
\varepsilon_n = \sup_{\lambda\in\partial\Lambda_0, x\in G(F,\lambda_0), \N{x}_{\calX}=1} \N{F_n(\lambda)p_nx - q_nF(\lambda)x}_{\calY_n}.
\end{equation*}
Here $r$ is the maximum length of all Jordan chains of $\lambda$.
\end{theorem}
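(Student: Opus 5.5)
The plan is to reduce the infinite-dimensional problem near $\lambda_0$ to a finite matrix function. For that matrix function the resolvent grows at most like $|\lambda-\lambda_0|^{-r}$. A perturbation argument with the defect $\varepsilon_n$ then yields the bound \eqref{eq:en}. By Theorem \ref{thm:Karma1}, for $n$ large the set $\sigma(F_n)\cap\Lambda_0$ is nonempty and finite, with total multiplicity $\nu(F,\Lambda_0)$. So it suffices to take an arbitrary $\lambda_n\in\sigma(F_n)\cap\Lambda_0$ and bound $|\lambda_n-\lambda_0|$.

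\textbf{Step 1 (bordering / Schur reduction).} Since $F(\lambda_0)$ is Fredholm of index zero, let $m=\dim\ker F(\lambda_0)$. Choose $\phi_1,\dots,\phi_m$ spanning $\ker F(\lambda_0)$, taken among the eigenvectors heading canonical Jordan chains, so that $\phi_j\in G(F,\lambda_0)$. Choose functionals $\ell_1,\dots,\ell_m$ and vectors $\psi_1,\dots,\psi_m$ in a complement of $\operatorname{ran}F(\lambda_0)$. The bordered operator
\[
\mathcal{A}(\lambda)=\begin{pmatrix}F(\lambda)&\Psi\\ L&0\end{pmatrix}:\calX\times\bbC^m\to\calY\times\bbC^m
\]
is then invertible at $\lambda_0$, hence on a neighbourhood. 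Shrinking $\Lambda_0$ if necessary, which is harmless because $\lambda_0$ is isolated, I will assume $\mathcal{A}(\lambda)$ is invertible on all of $\Lambda_0$. The Schur complement $D(\lambda)\in\bbC^{m\times m}$ is holomorphic, and $F(\lambda)$ is singular exactly when $D(\lambda)$ is, with equal partial multiplicities. Keldysh's canonical system theory gives a factorization
\[
D(\lambda)=E(\lambda)\operatorname{diag}\big((\lambda-\lambda_0)^{r_j}\big)G(\lambda)
\]
with $E,G$ invertible near $\lambda_0$ and $\max_j r_j=r$. Hence $\|D(\lambda)^{-1}\|\le C|\lambda-\lambda_0|^{-r}$ on $\Lambda_0\setminus\{\lambda_0\}$.

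\textbf{Step 2 (discrete bordering).} Form $\mathcal{A}_n(\lambda)$ from $F_n(\lambda)$, $q_n\Psi$, and functionals $\ell_j^n$ on $\calX_n$ satisfying $\ell_j^n(p_nx)\to\ell_j(x)$. Regular convergence of $F_n$ to $F$ implies regular convergence of $\mathcal{A}_n$ to $\mathcal{A}$, because the bordering is finite-rank. Regular convergence to an invertible limit implies stable convergence. Here I will argue by contradiction, as in Vainikko's theory: a sequence of unit vectors $x_n$ with $\mathcal{A}_nx_n\to0$ is $\calP$-compact, and its limit would lie in $\ker\mathcal{A}$. Uniformity of the stability in $\lambda\in\Lambda_0$ follows from equiboundedness and holomorphy. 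Consequently the discrete Schur complement $D_n(\lambda)$ is well defined and holomorphic on $\Lambda_0$, and $\lambda_n\in\sigma(F_n)$ forces $\det D_n(\lambda_n)=0$.

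\textbf{Step 3 (defect estimate; main obstacle).} The key and most delicate point is to show
\[
\sup_{\lambda\in\partial\Lambda_0}\|D_n(\lambda)-D(\lambda)\|\le C\varepsilon_n,
\]
where $\varepsilon_n$ involves only the action on $G(F,\lambda_0)$ and not a full uniform operator error. The plan is to write $D_n-D$ through the columns $F_n(\lambda)p_n\phi_j-q_nF(\lambda)\phi_j$, using the stable inverses of $\mathcal{A}_n$. This requires choosing the bordering so that the Schur complement is tested only against elements of $G(F,\lambda_0)$, which is the reason for taking the $\phi_j$ from the canonical chains. The error is controlled on $\partial\Lambda_0$, and $D_n-D$ is holomorphic, so the maximum principle extends the bound to all of $\Lambda_0$, and in particular to $\lambda_n$.

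\textbf{Step 4 (conclusion).} Since $D_n(\lambda_n)$ is singular,
\[
1\le\|D(\lambda_n)^{-1}\|\,\|D(\lambda_n)-D_n(\lambda_n)\|\le C|\lambda_n-\lambda_0|^{-r}\varepsilon_n .
\]
Rearranging gives $|\lambda_n-\lambda_0|\le C\varepsilon_n^{1/r}$, which is \eqref{eq:en}.
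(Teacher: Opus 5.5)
Your bordering architecture (Steps 1, 2 and 4) is sound in outline, but Step 3 asserts a bound that fails in general, and without it the argument does not close. The paper gives no proof of this statement; it quotes it from Karma's Theorem 2, so your proposal has to stand on its own.

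\textbf{Why Step 3 fails.} Write $V(\lambda)c$ for the $\calX$-component of $\mathcal{A}(\lambda)^{-1}(0,c)$. Stability of $\mathcal{A}_n$ then gives exactly
\[
(D-D_n)(\lambda)c=\Pi_2\,\mathcal{A}_n(\lambda)^{-1}\begin{pmatrix}(F_n(\lambda)p_n-q_nF(\lambda))V(\lambda)c\\ (L^np_n-L)V(\lambda)c\end{pmatrix}.
\]
So the columns of $D_n-D$ are driven by the defect on $V(\lambda)c$, not on the kernel vectors $\phi_j$. The $\phi_j$ do not even enter $\mathcal{A}$.

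You have $V(\lambda_0)c\in\ker F(\lambda_0)$. For $\lambda\in\partial\Lambda_0$, however, $V(\lambda)c$ is a generic element of $\calX$. No choice of bordering keeps it in $G(F,\lambda_0)$: that would force $F(\lambda)V(\lambda)c\in\operatorname{ran}\Psi$ for all $\lambda$. This fails, for instance, for $F(\lambda)=\lambda I-T+\lambda^2S$ at a simple eigenvalue with $S\phi\notin\operatorname{span}\{\phi\}$.

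Even for $F(\lambda)=\lambda I-T$ with $T$ self-adjoint, a simple eigenvalue, and $\Psi$ not parallel to $\phi$, one gets $V(\lambda)=\phi+(\lambda-\lambda_0)w(\lambda)$. Here $w(\lambda)$ is the resolvent applied to the component of $\Psi$ off $\phi$. Hence $\sup_{\partial\Lambda_0}\|D_n-D\|$ is of the size of the approximation error for $w(\lambda)$, which can be arbitrarily slower than $\varepsilon_n$.

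There is also a smaller leak. If your $\ell_j^n$ only satisfy $\ell^n_j(p_nx)\to\ell_j(x)$, the bordering row contributes $(L^np_n-L)x$ for $x\in G(F,\lambda_0)$, and this is not part of $\varepsilon_n$.

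\textbf{How to repair it.} Never separate $D^{-1}$ from $D-D_n$.

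First, use the factorization $D_n=\bigl(I-(D-D_n)D^{-1}\bigr)D$. Singularity of $D_n(\lambda_n)$ then gives $1\le\|(D-D_n)(\lambda_n)D(\lambda_n)^{-1}\|$. In the formula above, multiplying by $D^{-1}$ replaces $V(\lambda)$ by $V(\lambda)D(\lambda)^{-1}=-F(\lambda)^{-1}\Psi$.

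Second, examine the negative Laurent coefficients at $\lambda_0$ of $F(\lambda)^{-1}\Psi c$; there are at most $r$ of them. Because $F(\lambda)F(\lambda)^{-1}\Psi c=\Psi c$ is holomorphic, these coefficients satisfy the Jordan-chain relations, so they lie in $G(F,\lambda_0)$. Their contribution is therefore bounded by $C\varepsilon_n|\lambda-\lambda_0|^{-r}$. Here the maximum principle, applied to $\lambda\mapsto(F_n(\lambda)p_n-q_nF(\lambda))x$ with $x\in G(F,\lambda_0)$, moves the sup from $\partial\Lambda_0$ to $\lambda_n$.

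Third, the holomorphic remainder of $F^{-1}\Psi$ contributes only an $o(1)$ term, uniformly on $\Lambda_0$. This follows from pointwise convergence, equicontinuity in $\lambda$, and compactness of its image; no rate is needed there.

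Fourth, choose $\ell^n_j$ with $\ell^n_j(p_nx)=\ell_j(x)$ for $x\in G(F,\lambda_0)$. In the Galerkin case, take $\ell_j|_{\calX_n}$ plus a Hahn--Banach correction whose norm tends to $0$. This removes the bordering-row leak.

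Together these give $1\le o(1)+C\varepsilon_n|\lambda_n-\lambda_0|^{-r}$, which yields the claimed estimate with exponent $1/r$.
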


Define the adjoint eigenvalue problem
\begin{equation*}
F^*(\lambda)x := [F(\overline{\lambda})]^*x =0 \ \ \text{in}\ \ \calY.
\end{equation*}
It is easy to see that $F^*(\lambda)$ is holomorphic with respect to $\lambda$. The generalized eigenspace of $F^*$ at $\overline{\lambda}$ is denoted by $G(F^*,\overline{\lambda})$.

\begin{theorem}\label{thm:Karma3}(\cite[Theorem 3]{Karma2})
Let the assumptions of Theorem \ref{thm:Karma1} hold.
Let $\Lambda_0\subset \Lambda$ be a compact set such that
$\partial\Lambda_0\subset\rho(F)$ and $\sigma(F)\cap\Lambda_0=\{\lambda_0\}$.
Let $r_n\in \calL(\calX_n,\calX)$ and $q_n\in \calL(\calY,\calY_n)$ 
be such that
\begin{enumerate}
\item
$\Vert r_n\Vert\leq C$, $\forall n$,
\item
$\Vert r_n p_n x - p_n x\Vert_{\calX_n}\rightarrow 0$, $n\rightarrow \infty$, $\forall x\in G(F,\lambda_0)$,
\end{enumerate}
and let $F_n(\lambda) = q_nF(\lambda)r_n$, $\forall \lambda\in \Lambda$. 
Then for sufficiently large $n$, \eqref{eq:en} holds
with $\epsilon_n = d_n d_n^*$, where
\begin{align*}
d_n &= \max_{x\in G(F,\lambda_0), \Vert x\Vert_{\calX} = 1}{\rm{dist}}(x,r_n\calX_n),
\\
d_n^* &= \max_{y\in G(F^*,\overline{\lambda}_0), \Vert y\Vert_{\calY} = 1}{\rm{dist}}(y,q_n^*\calY_n).
\end{align*}
\end{theorem}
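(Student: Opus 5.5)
The statement is Theorem~\ref{thm:Karma3}, an abstract result quoted from \cite{Karma2}. My plan is to reproduce the standard argument: reduce the eigenvalue problem near $\lambda_0$ to a finite-dimensional holomorphic matrix function via a Schur complement. The aim is to show that, when $F_n=q_nF(\cdot)r_n$ has Galerkin structure, the perturbation of that matrix function is of product type $d_nd_n^*$ rather than of the first-order size $\varepsilon_n$ appearing in Theorem~\ref{thm:Karma2}.

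\textbf{Step 1: continuous reduction.} Since $F(\lambda_0)$ is Fredholm of index zero, I choose finite-dimensional spaces $\calX^0=\ker F(\lambda_0)$ and $\calY^0$ complementary to the range $F(\lambda_0)\calX$, and closed complements $\calX^1,\calY^1$. In the resulting block decomposition, the block $F_{11}(\lambda):\calX^1\to\calY^1$ is invertible for $\lambda$ in a neighbourhood $U$ of $\lambda_0$. The Schur complement
\[
S(\lambda)=F_{00}(\lambda)-F_{01}(\lambda)F_{11}(\lambda)^{-1}F_{10}(\lambda)
\]
is then a holomorphic $m\times m$ matrix function. Its local Smith form at $\lambda_0$ has partial multiplicities equal to the Jordan chain lengths of $F$, so the largest is $r$. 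Its columns and rows can be written through elements of $G(F,\lambda_0)$ and $G(F^*,\overline{\lambda}_0)$.

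\textbf{Step 2: discrete reduction with uniform control.} I carry out the same reduction for $F_n$. I use $p_n$ and $q_n$ to transport the spaces, and use $r_n$ together with hypotheses 1 and 2 to transport elements of $G(F,\lambda_0)$. Regular convergence is part of the assumptions of Theorem~\ref{thm:Karma1}. Combined with equiboundedness, it gives stable convergence of the complementary blocks: $F_{n,11}(\lambda)$ is invertible with uniformly bounded inverse on a fixed compact neighbourhood of $\lambda_0$ for all large $n$. This yields a discrete Schur complement $S_n(\lambda)$, holomorphic and of the same size $m$. Its eigenvalues in $\Lambda_0$ are exactly the zeros of $\det S_n$, by Theorem~\ref{thm:Karma1}.

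\textbf{Step 3: product-type perturbation bound (main obstacle).} I expect this step to be the hardest: showing that, after passing to the Smith-normal-form coordinates,
\[
\sup_{\lambda\in U}\abs{S_n(\lambda)-S(\lambda)}\le C\,d_nd_n^*.
\]
The plan is to write each entry difference as a pairing $\langle F(\lambda)(x-r_nx_n),\,y-q_n^*y_n\rangle$, with $x\in G(F,\lambda_0)$ and $y\in G(F^*,\overline{\lambda}_0)$. This uses the identity $F_n=q_nFr_n$, which gives the Galerkin orthogonality $q_nF(\lambda)(x-r_nx_n)=0$ for the discrete "projection" $x_n$, together with the analogous adjoint identity. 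Taking the infimum over $x_n$ and $y_n$ independently, and using the uniform bound on $F_{n,11}^{-1}$ to keep the discrete approximants quasi-optimal, gives the factor $d_n$ from the first argument and $d_n^*$ from the second. The delicate part is the coupling through $F_{11}^{-1}$ and handling the higher Jordan-chain elements. If this coupling resists, I would first do the semisimple case ($r=1$), then treat chains by differentiating the Schur complement identity in $\lambda$.

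\textbf{Step 4: conclusion.} $S(\lambda)$ and $S_n(\lambda)$ are now holomorphic $m\times m$ matrices that differ by $O(d_nd_n^*)$ uniformly near $\lambda_0$. In Smith form, $S(\lambda)=E(\lambda)\,\mathrm{diag}\bigl((\lambda-\lambda_0)^{k_j}\bigr)H(\lambda)$ with $E,H$ invertible and $k_j\le r$. A standard matrix Rouché / perturbation argument then shows that every zero $\lambda_n$ of $\det S_n$ in $\Lambda_0$ satisfies $\abs{\lambda_n-\lambda_0}^r\le C\,d_nd_n^*$, which is \eqref{eq:en} with $\varepsilon_n=d_nd_n^*$.
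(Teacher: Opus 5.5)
The paper gives no proof of this statement; it is quoted from \cite[Theorem 3]{Karma2}, so your sketch has to stand on its own. The architecture is reasonable: Schur reduction, then Galerkin orthogonality, then Smith form and a perturbation argument. Steps 1 and 2 are standard, and Step 4 would be standard if Step 3 held.

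\textbf{Step 3 has a genuine gap.} The bound $\sup_{\lambda\in U}\abs{S_n(\lambda)-S(\lambda)}\le C\,d_nd_n^*$ is false in general, and your mechanism for it breaks down away from $\lambda_0$. For $x\in\ker F(\lambda_0)$, the column $S(\lambda)x$ is the $\calY^0$-component of $F(\lambda)\tilde x(\lambda)$, where $\tilde x(\lambda)=x-F_{11}(\lambda)^{-1}F_{10}(\lambda)x$. Galerkin orthogonality therefore produces pairings $\langle F(\lambda)(\tilde x(\lambda)-r_nx_n),\tilde y(\lambda)-q_n^*y_n\rangle$. Here $\tilde x(\lambda)$ and $\tilde y(\lambda)$ are \emph{not} in $G(F,\lambda_0)$ and $G(F^*,\overline{\lambda}_0)$ for $\lambda\neq\lambda_0$. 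For instance, $\tilde x'(\lambda_0)=-F_{11}(\lambda_0)^{-1}F_{10}'(\lambda_0)x$ is an arbitrary element of $\calX$ when $x$ has no associated vector.

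A concrete counterexample: on $\ell^2$ let $F(\lambda)=I-(1+\lambda)e_1e_1^*+\lambda(e_1w^*+we_1^*)$, with $w\perp e_1$ having infinitely many nonzero entries. Take $\lambda_0=0$, which is simple with $r=1$ and $F^*=F$. Use Galerkin on $V_n=\mathrm{span}\{e_1,\dots,e_n\}$ with $q_n$ the orthogonal projection. Then $d_n=d_n^*=0$, but $S(\lambda)=-\lambda-\lambda^2\N{w}^2$ and $S_n(\lambda)=-\lambda-\lambda^2\N{q_nw}^2$. Hence $S_n-S=\lambda^2\N{(I-q_n)w}^2\neq 0$ on $U$. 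The theorem itself holds here, since the discrete eigenvalue is exactly $0$; it is your intermediate estimate that fails.

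\textbf{What the Galerkin structure does give is a structured bound.} In Smith coordinates the primal root functions have the form $\Phi_k(\lambda)=\sum_{l<k_k}(\lambda-\lambda_0)^l x_{k,l}+(\lambda-\lambda_0)^{k_k}\Phi_k^{\sharp}(\lambda)$ with $x_{k,l}\in G(F,\lambda_0)$, and the dual ones $\Psi_j$ are analogous. So the $(j,k)$ entry of $E^{-1}(S_n-S)H^{-1}$ is at best $O\bigl((d_n+\abs{\lambda-\lambda_0}^{k_k}e_n)(d_n^*+\abs{\lambda-\lambda_0}^{k_j}e_n^*)\bigr)$. Here $e_n,e_n^*\to 0$ at a rate not controlled by $d_n,d_n^*$.

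When all partial multiplicities equal $r$ (in particular for $r=1$), your Neumann-series argument with $\N{S(\lambda)^{-1}}\le C\abs{\lambda-\lambda_0}^{-r}$ still closes. For unequal partial multiplicities it does not. A cross term $d_ne_n^*\abs{\lambda-\lambda_0}^{k_j}$ with $k_j<r$, multiplied by $\abs{\lambda-\lambda_0}^{-r}$, need not be small at $\abs{\lambda-\lambda_0}\sim(d_nd_n^*)^{1/r}$ when $d_n^*\ll d_n$. Such terms must instead be absorbed as row- and column-relative perturbations of $\mathrm{diag}((\lambda-\lambda_0)^{k_j})$. One way is a determinant expansion; another is a matrix Rouch\'e argument on $\abs{\lambda-\lambda_0}=c\,(d_nd_n^*)^{1/r}$.

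That structured argument is the missing ingredient. As written, your central step is both unproved, as you acknowledge, and incorrect in the form stated.
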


\subsection{Convergence result for the eigenvalues}
 
Assume that $\calX_n\subset\calX, \calY_n\subset\calY$, $p_n,q_n$ are projections such that
\begin{equation*}
\N{x-p_nx}_{\calX}\to 0,\ \N{y-q_ny}_{\calY}\to 0,\quad  n\to\infty
\end{equation*}
for all $x\in\calX, y\in\calY$. 
Let $A,K\in\calL(\calX,\calY)$ and $A_n,K_n\in\calL(\calX_n,\calY_n)$, the following lemma gives two sufficient conditions for the regular convergence of $A_n+K_n$ to $A+K$.

\begin{lemma}\label{lem:RegularConvergence}(\cite[Lemma 3.4, 3.5]{XiGongSun2024})
Assume that $A$ is invertible, $K$ is compact, $A_n$ converges stably to $A$. If $K_n$ converges uniformly to $K$, or $K_n=q_nK|_{\calX_n}$,
then $A_n+K_n$ converges regularly to $A+K$.
\end{lemma}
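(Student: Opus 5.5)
The plan is to prove the two cases separately but along a common skeleton. Regular convergence requires two things: ordinary convergence of $A_n+K_n$ to $A+K$, and the compactness implication. Ordinary convergence is immediate in both cases. $A_n$ converges to $A$ by stable convergence. For $K_n$, in the uniform case we use $\N{K_np_n - q_nK}\to 0$. In the second case $K_np_nx = q_nKp_nx$, so
\[
\N{q_nKp_nx - q_nKx}\le \N{q_n}\,\N{K}\,\N{p_nx-x}\to 0 .
\]
Here $\N{q_n}$ is uniformly bounded by the Banach--Steinhaus theorem, since $q_ny\to y$ for every $y$.

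For the compactness implication, take $\{x_n\}$ with $x_n\in\calX_n$ and suppose $\{(A_n+K_n)x_n\}$ is $\calQ$-compact. Pass to a subsequence $N''$ and some $y\in\calY$ with $\N{(A_n+K_n)x_n - q_ny}\to 0$. We would first show that $\{x_n\}$ is bounded. Suppose instead that $\N{x_n}$ is unbounded along a further subsequence, and normalize $\tilde x_n = x_n/\N{x_n}$. Then $(A_n+K_n)\tilde x_n\to 0$.

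The key step is to extract a convergent subsequence from $K_n\tilde x_n$. Since $\{\tilde x_n\}$ is bounded in $\calX$ and $\calX$ is a Hilbert space in our application, a subsequence converges weakly, and the compactness of $K$ gives $K\tilde x_n\to z$ strongly.
\begin{itemize}
\item In the second case, $K_n\tilde x_n = q_nK\tilde x_n$, which converges to $z$ because $q_n$ is uniformly bounded and $q_nz\to z$.
\item In the uniform case, write $K_n\tilde x_n = K_np_n\tilde x_n$, which holds since $p_n$ is a projection onto $\calX_n$. Then
\[
\N{K_n\tilde x_n - q_nK\tilde x_n}\le \N{K_np_n-q_nK}\to 0 ,
\]
so again $K_n\tilde x_n\to z$.
\end{itemize}
Consequently $A_n\tilde x_n\to -z$. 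Stability then gives $\tilde x_n = A_n^{-1}A_n\tilde x_n$, and we compare this with $p_nA^{-1}(-z)$:
\[
\N{\tilde x_n - p_nA^{-1}(-z)} \le C\,\N{A_n\tilde x_n - A_np_nA^{-1}(-z)}.
\]
By convergence of $A_n$ we have $A_np_nA^{-1}(-z) - q_n(-z)\to 0$, so the right-hand side tends to $0$. Hence $\tilde x_n\to w:=-A^{-1}z$, with $\N{w}=1$.

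To reach a contradiction we pass to the limit. Since $K_n\tilde x_n\to Kw$ as well, we get $(A+K)w=0$. Without injectivity of $A+K$ this does not immediately contradict anything, so the plan avoids the normalization argument and runs the estimate directly on $x_n$ after establishing boundedness. Boundedness is the delicate step. If $A+K$ is not injective, boundedness can genuinely fail: $x_n$ could be a large multiple of a kernel element approximation, and $\calP$-compactness would then fail.

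I therefore expect the intended route (as in \cite{XiGongSun2024}) to be the standard one: regular convergence is defined for bounded sequences, or only bounded sequences are considered. With boundedness taken as given, the argument from the previous two paragraphs applied to $x_n$ itself yields $K_nx_n\to z$ along a subsequence. Then $A_nx_n\to y - z$, and stability gives $\N{x_n - p_nA^{-1}(y-z)}\to 0$, which is exactly $\calP$-compactness.

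The main obstacle is twofold: the boundedness issue, and, in the uniform case, correctly justifying $K_nx_n\to z$ via the weak-to-strong compactness of $K$ combined with uniform convergence.
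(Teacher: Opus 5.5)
Your proof is correct and is the standard argument behind this result; the paper gives no proof of its own, only a citation. For a bounded $\{x_n\}$ with $\{(A_n+K_n)x_n\}$ $\calQ$-compact, you extract $Kx_n\to z$, obtain $\|K_nx_n-q_nz\|\to 0$ in either case, and then use the stability of $A_n$ to get $\|x_n-p_nA^{-1}(y-z)\|\to 0$.

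You are also right that the definition of regular convergence must be read, as in Vainikko and Karma, for bounded sequences only: as literally worded in the paper the lemma fails whenever $A+K$ is singular (e.g.\ $A_n=A$, $K_n=K$, $x_n=nw$ with $0\neq w\in\ker(A+K)$). Your detour through weak compactness is unnecessary, since a compact $K$ maps the bounded set $\{x_n\}$ to a relatively compact set directly, which keeps the argument valid in the general Banach-space setting of the lemma.
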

Now we prove some  properties of the related operators.

\begin{lemma}\label{lem:Fredholm-BNn}
$B^N(\kappa)$ and $B_n^N(\kappa)$ are holomorphic Fredholm operator functions of index zero on $\Lambda$.
\end{lemma}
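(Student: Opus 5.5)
The plan is to mirror the proof of Theorem~\ref{thm:Fredholm}, using the splittings $B^N(\kappa)=A^N(\kappa)+K^N(\kappa)$ and $B^N_n(\kappa)=A^N_n(\kappa)+K^N_n(\kappa)$.

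\emph{Holomorphy.} The truncated operator $\calT^N(\kappa)$ in \eqref{eq:CalderonN} is a finite sum. Each coefficient $\frac{\rmi\kappa R\,h_n^{(1)}(\kappa R)}{z_n^{(1)}(\kappa R)}$ and $\frac{z_n^{(1)}(\kappa R)}{\rmi\kappa R\,h_n^{(1)}(\kappa R)}$, for $n\le N$, is holomorphic on $\tilde\Lambda\supset\Lambda$, because the zeros of $h_n^{(1)}$ and $z_n^{(1)}$ have been removed. So $\kappa\mapsto\calT^N(\kappa)$ is holomorphic as a map into $\calL(\BH^{-\frac12}(\Curl,\Gamma_R),\BH^{-\frac12}(\Div,\Gamma_R))$; no uniform-in-$n$ estimates such as \eqref{eq:est1}--\eqref{eq:est2} are needed. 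The remaining terms $(\curl\cdot,\curl\cdot)-\kappa^2(\cdot,\cdot)$ are polynomial in $\kappa$. Hence $B^N(\kappa)$, and its restriction/compression $B^N_n(\kappa)$ on the closed subspace $\BX_n$, are holomorphic.

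\emph{Fredholm index zero for $B^N$.} The operator $\calT^N_1(\kappa)$ is the truncation of $\calT_1(\kappa)$, with coefficients $\rmi\kappa R/\delta_n(\rmi)$ and $\delta_n(\rmi)/(\rmi\kappa R)$ for $n\le N$, where $\delta_n(\rmi)<0$. The sign argument in Lemma~\ref{lem:inf-sup} therefore applies verbatim: $-\Re\langle\calT^N_1(\kappa)\Bvarphi_T,\Bvarphi_T\rangle_{\Gamma_R}\ge 0$ for $\kappa_2<0$, since it is a partial sum of nonnegative terms. Testing with $\Bpsi=\overline{(1/\rmi\kappa)}\Bvarphi$ gives the inf-sup bound with the same constant $C=\min(\alpha_2^2,\alpha_2/\alpha_1)$, independent of $N$. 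Since $A^N(\kappa)$ is bounded by \eqref{eq:TNbound}, it is coercive in the sense $\Re\, a(\Bvarphi,\overline{(1/\rmi\kappa)}\Bvarphi)\ge c\|\Bvarphi\|^2_{\BX}$, and Lax--Milgram gives invertibility. For compactness, $\calT^N_2(\kappa)$ has finite rank: its range lies in $\mathrm{span}\{\BU_n^m,\BV_n^m: n\le N\}$. Then $K^N(\kappa)=-\rmi\kappa M\calT^N_2(\kappa)\gamma_T$ is finite rank, hence compact. (Lemma~\ref{lem:compact} restricted to $n\le N$ would also do.) So $B^N(\kappa)$ is invertible plus compact, hence Fredholm of index zero.

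\emph{Discrete operator $B_n^N$.} $\BX_n$ is finite-dimensional, so every linear operator $\BX_n\to\BX_n$ is trivially Fredholm with index $\dim\ker-\operatorname{codim}\operatorname{ran}=0$ by rank--nullity. Still, to match the structure used later, I would note that $\BX_n\subset\BX$ is conforming. The coercivity above then restricts to $\BX_n$, so $A^N_n(\kappa)$ is invertible with $\|A_n^N(\kappa)^{-1}\|\le C^{-1}$ uniformly in $n,N$. This stable invertibility is what Lemma~\ref{lem:RegularConvergence} will need later, and $K_n^N$ is finite rank.

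The only point requiring care is confirming that coercivity survives truncation. This holds because truncation only drops terms of a definite sign, so I expect no real obstacle.
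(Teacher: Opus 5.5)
Your proposal is correct and follows the paper's route. The paper simply says that holomorphy and the Fredholm property of $B^N(\kappa)$ follow as in the proof of Theorem~\ref{thm:Fredholm} (invertible $A^N(\kappa)$ plus compact $K^N(\kappa)$), and that $B^N_n(\kappa)$ is Fredholm of index zero because $\BX_n$ is finite dimensional. Your added details are valid and fill in what the paper leaves implicit: truncation removes only terms of a fixed sign, so the coercivity of Lemma~\ref{lem:inf-sup} survives uniformly in $N$; and $\calT^N(\kappa)$ and $K^N(\kappa)$ have finite rank, so holomorphy and compactness need none of the asymptotic estimates \eqref{eq:est1}--\eqref{eq:est2}.
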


\begin{proof}
As in the proof of Theorem \ref{thm:Fredholm}, one can show that $B^N(\kappa)$ and $B_n^N(\kappa)$ are holomorphic operator functions, and $B^N(\kappa)$ is a Fredholm operator with index zero on $\Lambda$. Since $\BX_n$ is finite dimensional, $B_n^N(\kappa)$ is also a Fredholm operator with index zero.
\end{proof}

\begin{lemma}\label{lem:equibounded}
On each compact $\Lambda_0\subset\Lambda$, $B^N(\kappa)$ and $B_n^N(\kappa)$ are equibounded.
\end{lemma}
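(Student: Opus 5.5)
The plan is to bound $\N{B^N(\kappa)}$ and $\N{B_n^N(\kappa)}$ directly from their defining sesquilinear forms. The key requirement is that the constants be uniform in $N$, $n$ and $\kappa\in\Lambda_0$. For $\Bvarphi,\Bpsi\in\BX$, the definition \eqref{eq:BN} gives
\begin{equation*}
\abs{(B^N(\kappa)\Bvarphi,\Bpsi)_{\BX}} \le \N{\Bvarphi}_{\BX}\N{\Bpsi}_{\BX} + \abs{\kappa}^2\N{\Bvarphi}_{\BX}\N{\Bpsi}_{\BX} + \abs{\kappa}\,\N{\calT^N(\kappa)\Bvarphi_T}_{\BH^{-\frac12}(\Div,\Gamma_R)}\N{\Bpsi_T}_{\BH^{-\frac12}(\Curl,\Gamma_R)}.
\end{equation*}
The boundedness of the tangential trace $\gamma_T:\BX\to\BH^{-\frac12}(\Curl,\Gamma_R)$ then reduces everything to a bound on $\N{\calT^N(\kappa)}$ that is uniform in $N$ and in $\kappa\in\Lambda_0$. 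Since $\abs{\kappa}\le\alpha_1$ on $\Lambda$, the remaining terms are already uniform.

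For the DtN part, I would not use \eqref{eq:TNbound} as a black box, since its constant might seem to depend on $N$. Instead I would argue coefficientwise. By the norms \eqref{eq:norm-HsCurl}–\eqref{eq:norm-HsDiv}, $\N{\calT^N(\kappa)\Bxi}_{\BH^{-\frac12}(\Div,\Gamma_R)}^2$ equals the partial sum up to $n=N$ of exactly the same terms that make up $\N{\calT(\kappa)\Bxi}^2$. Hence $\N{\calT^N(\kappa)\Bxi}\le\N{\calT(\kappa)\Bxi}$ for every $N$, so it suffices to bound $\N{\calT(\kappa)}$ uniformly on $\Lambda_0$. That in turn requires a uniform bound on the multipliers: $\abs{\kappa R/\delta_n(\kappa)}\le C/n$ and $\abs{\delta_n(\kappa)/(\kappa R)}\le Cn$ for all $n\ge1$ and $\kappa\in\Lambda_0$.

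This uniformity in $\kappa$ is the main obstacle. For large $n$ I would use the expansion \eqref{eq:zdh}, $\delta_n(\kappa)=-n+(\kappa R)^2/(2n-1)+O(1/n^2)$. The $O$-term comes from \eqref{eq:hasymp} and is uniform for $\kappa R$ in a compact set, so there is an $n_0$, independent of $\kappa\in\Lambda_0$, with $n/2\le\abs{\delta_n(\kappa)}\le 2n$ for $n\ge n_0$. For the finitely many $n<n_0$, the functions $\kappa\mapsto\kappa R/\delta_n(\kappa)$ and $\delta_n(\kappa)/(\kappa R)$ are continuous on the compact set $\Lambda_0$. This holds because $\Lambda_0\subset\Lambda\subset\tilde\Lambda$ avoids $0$ and the zeros of $h_n^{(1)}$ and $z_n^{(1)}$, so each attains a finite maximum. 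Combining the two ranges gives $\N{\calT(\kappa)}\le C(\Lambda_0)$, and hence $\N{B^N(\kappa)}\le C$ for all $N$ and $\kappa\in\Lambda_0$.

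Finally, for $B_n^N(\kappa)$ I would note that \eqref{eq:BNn} is the same form restricted to $\BX_n\subset\BX$, with the $\BX$ inner product. For $\Bvarphi_n\in\BX_n$, the element $B_n^N(\kappa)\Bvarphi_n$ is the $\BX$-orthogonal projection onto $\BX_n$ of $B^N(\kappa)\Bvarphi_n$. Therefore $\N{B_n^N(\kappa)}\le\N{B^N(\kappa)}\le C$, uniformly in $n$, $N$ and $\kappa\in\Lambda_0$, which proves the lemma.
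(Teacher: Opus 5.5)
Your proof is correct and follows the same route as the paper, which bounds the defining forms \eqref{eq:BN} and \eqref{eq:BNn} term by term using the boundedness \eqref{eq:TNbound} of the truncated DtN operator. You also supply what the paper's one-line proof leaves implicit: the constant in \eqref{eq:TNbound} is uniform in $N$ (each $\calT^N(\kappa)$ norm is a partial sum of the $\calT(\kappa)$ norm) and in $\kappa\in\Lambda_0$ (uniform large-$n$ asymptotics plus continuity for finitely many $n$), and $B_n^N(\kappa)=q_nB^N(\kappa)|_{\BX_n}$, so its norm is controlled by that of $B^N(\kappa)$.
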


\begin{proof}
The equiboundedness of $B^N(\kappa)$ and $B_n^N(\kappa)$ follows directly from \eqref{eq:TNbound} and the definitions \eqref{eq:BN} and \eqref{eq:BNn}.
\end{proof}

\begin{lemma}\label{lem:invertible}
$A^N(\kappa)$ and $A_n^N(\kappa)$ are invertible with
\begin{equation*}
\N{(A^N(\kappa))^{-1}}\le C,\quad \N{(A_n^N(\kappa))^{-1}}\le C,\quad\forall N\in\bbN, \,n\in\bbN.
\end{equation*}
\end{lemma}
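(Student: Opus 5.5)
The plan is to repeat the argument of Lemma \ref{lem:inf-sup} for the truncated form and then check that neither $N$ nor the mesh index $n$ enters the constant. For $\Bvarphi\in\BX$ we take the test function $\Bpsi=\overline{(\frac{1}{\rmi\kappa})}\Bvarphi$, exactly as before. This gives
\begin{equation*}
(A^N(\kappa)\Bvarphi,\Bpsi)_{\BX} = \frac{1}{\rmi\kappa}\NLtwov{\curl\Bvarphi}^2 +\rmi\kappa\NLtwov{\Bvarphi}^2 - \langle\calT^N_1(\kappa)\Bvarphi_T,\Bvarphi_T\rangle_{\Gamma_R}.
\end{equation*}

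The key observation concerns the boundary term. Writing $\Bvarphi_T$ in the form \eqref{eq:xi}, orthonormality of $\BU_n^m,\BV_n^m$ yields
\begin{equation*}
\langle\calT^N_1(\kappa)\Bvarphi_T,\Bvarphi_T\rangle_{\Gamma_R}=\sum_{n=1}^{N}\sum_{m=-n}^n \frac{\rmi\kappa R}{\delta_n(\rmi)}\abs{a_n^m}^2+\frac{\delta_n(\rmi)}{\rmi\kappa R}\abs{b_n^m}^2 .
\end{equation*}
This is the same finite-or-infinite sum as for $\calT_1$, with the terms $n>N$ removed. By \cite[Lemma 9.22]{Monk03}, $\delta_n(\rmi)<0$. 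Moreover $\Re(\rmi\kappa)=-\kappa_2>0$ and $\Re(1/(\rmi\kappa))=-\kappa_2/\abs{\kappa}^2>0$. Hence each summand has nonpositive real part, so $-\Re\langle\calT^N_1(\kappa)\Bvarphi_T,\Bvarphi_T\rangle_{\Gamma_R}\ge0$ for every $N$.

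It follows that $\Re (A^N(\kappa)\Bvarphi,\Bpsi)_{\BX}\ge \min(1,\abs{\kappa}^{-2})\abs{\kappa_2}\N{\Bvarphi}_{\BX}^2$. Since $\N{\Bpsi}_{\BX}=\abs{\kappa}^{-1}\N{\Bvarphi}_{\BX}$, this gives $\N{A^N(\kappa)\Bvarphi}_{\BX}\ge C\N{\Bvarphi}_{\BX}$ with $C=\min(\alpha_2^2,\alpha_2/\alpha_1)$, independent of $N$. The same estimate shows that the form is coercive up to the rotation factor, i.e.\ $\abs{a(\Bvarphi,\Bvarphi)}\ge c\N{\Bvarphi}^2$. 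Lax–Milgram therefore gives invertibility, and hence $\N{(A^N(\kappa))^{-1}}\le 1/C$.

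For $A_n^N(\kappa)$, the test function $\overline{(1/(\rmi\kappa))}\Bvarphi_n$ lies in $\BX_n$ because $\BX_n$ is a linear space. The identical computation then holds on $\BX_n$, and coercivity is inherited by the conforming subspace $\BX_n\subset\BX$. Invertibility follows, either by finite dimensionality plus injectivity or by Lax–Milgram on $\BX_n$, with the same bound $1/C$ uniform in $n$ and $N$.

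I expect only one delicate point: $\calT_1^N$ must be exactly the truncation of $\calT_1$, using $\delta_n(\rmi)$ rather than $\delta_n(\kappa)$, so that the sign argument survives. The fixed reference point $\rmi$ lies away from $\calZ$, so these coefficients are well defined. Beyond this, the argument is routine.
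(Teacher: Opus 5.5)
Your proposal is correct and follows the paper's route. The paper's proof is the one line ``analogously to Theorem \ref{thm:Fredholm}'', meaning the rotated-test-function argument of Lemma \ref{lem:inf-sup}; you carry that out explicitly, showing that the sign of $\delta_n(\rmi)$ keeps the truncated boundary term's real part nonpositive for every $N$ and that the same test function lies in $\BX_n$. Your appeal to coercivity of the rotated form (Lax--Milgram) also supplies the surjectivity that the paper leaves implicit when it passes from the inf-sup bound to invertibility.
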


\begin{proof}
The proof can be obtained analogously to that of Theorem \ref{thm:Fredholm}.
\end{proof}

Let $\calX = \calY = \BX,\ \calX^N = \calY^N = \BX$, and the projections $p^N$ and $q^N$ be the identity operator from $\BX$ to itself.

\begin{lemma}\label{lem:BN-regular}
For $\kappa\in \Lambda$, $B^N(\kappa)$ converges regularly to $B(\kappa)$. Moreover, for sufficiently large $N$, $\rho(B^N)\neq\emptyset$.
\end{lemma}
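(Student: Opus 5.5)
We plan to apply Lemma~\ref{lem:RegularConvergence} with $\calX=\calY=\calX^N=\calY^N=\BX$, $p^N=q^N=I$, $A=A(\kappa)$, $K=K(\kappa)$, $A_N=A^N(\kappa)$, $K_N=K^N(\kappa)$. Since $B(\kappa)=A(\kappa)+K(\kappa)$ by \eqref{eq:operator-split}, this requires four facts: $A(\kappa)$ is invertible; $K(\kappa)$ is compact; $A^N(\kappa)$ converges stably to $A(\kappa)$; and $K^N(\kappa)$ converges uniformly to $K(\kappa)$. The first two are contained in the proof of Theorem~\ref{thm:Fredholm}.

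\emph{Stable convergence of $A^N$.} The uniform bound on $(A^N(\kappa))^{-1}$ is Lemma~\ref{lem:invertible}. It follows from the inf-sup argument of Lemma~\ref{lem:inf-sup}, since the truncated $\calT_1^N(\kappa)$ keeps only finitely many terms, each with $\delta_n(\rmi)<0$, so the sign argument is unchanged. For pointwise convergence, fix $\Bvarphi\in\BX$. By the definitions we have
\[
\N{(A^N(\kappa)-A(\kappa))\Bvarphi}_{\BX}=\sup_{\Bpsi}\frac{\abs{\kappa\langle(\calT_1-\calT_1^N)(\kappa)\Bvarphi_T,\Bpsi_T\rangle_{\Gamma_R}}}{\N{\Bpsi}_{\BX}}\le C\N{(\calT_1-\calT_1^N)(\kappa)\Bvarphi_T}_{\BH^{-\frac12}(\Div,\Gamma_R)}.
\]
This uses the boundedness of $\gamma_T:\BX\to\BH^{-\frac12}(\Curl,\Gamma_R)$. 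The right-hand side is the tail sum over $n>N$ of the norm \eqref{eq:norm-HsDiv}. Its coefficients are bounded by $C\N{\Bvarphi_T}^2_{\BH^{-\frac12}(\Curl,\Gamma_R)}$, because $\delta_n(\rmi)\sim -n$. Hence the tail tends to $0$ for fixed $\Bvarphi$. Equivalently, we may use Lemma~\ref{lem:TruncationError} with $s=0$, where $\gamma_0(N,\Bxi)\to0$. We only obtain pointwise convergence here, not uniform, which is exactly what stable convergence needs.

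\emph{Uniform convergence of $K^N$.} This is the main step. Here we need an operator-norm estimate. By \eqref{eq:zdh-diff}, the tail coefficients of $\calT_2-\calT_2^N$ are $O(n^{-3})$ in the $\BU$-component and $O(n^{-1})$ in the $\BV$-component. Inserting these into \eqref{eq:norm-HsDiv}, exactly as in the proof of Lemma~\ref{lem:compact} but summing only over $n>N$, gives
\[
\N{(\calT_2-\calT_2^N)(\kappa)\Bxi}^2_{\BH^{-\frac12}(\Div,\Gamma_R)}\le C\sum_{n>N}\Big(n^{1-6}\abs{a_n^m}^2+n^{-1-2}\abs{b_n^m}^2\Big)\le \frac{C}{N^{2}}\N{\Bxi}^2_{\BH^{-\frac12}(\Curl,\Gamma_R)}.
\]
Composing with $\gamma_T$ and $M$ yields $\N{K^N(\kappa)-K(\kappa)}\le C/N\to0$. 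The point requiring care is that the constants in \eqref{eq:zdh-diff} are uniform in $n$ for fixed $\kappa\in\Lambda$. This holds because the zeros in $\calZ$ are excluded from $\Lambda$ and the asymptotics \eqref{eq:hasymp} control large $n$. Lemma~\ref{lem:RegularConvergence} then gives regular convergence of $B^N(\kappa)$ to $B(\kappa)$.

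\emph{Nonempty resolvent set.} By Theorem~\ref{thm:Fredholm}, pick $\kappa_0\in\rho(B)\cap\Lambda$. Write $B^N(\kappa_0)=B(\kappa_0)+(A^N-A)(\kappa_0)+(K^N-K)(\kappa_0)$. The $K$-difference is small in norm, but the $A$-difference is not. So we instead argue via regular convergence. If $B^N(\kappa_0)$ failed to be injective along a subsequence, we would take $\N{x_N}=1$ with $B^N(\kappa_0)x_N=0$. This sequence is trivially $\calQ$-compact, so regular convergence makes $x_N$ $\calP$-compact: along a subsequence $x_N\to x$ with $\N{x}=1$. Then $B(\kappa_0)x=\lim\big(B(\kappa_0)-B^N(\kappa_0)\big)x_N=0$. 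This limit uses the pointwise convergence plus the equiboundedness from Lemma~\ref{lem:equibounded}, which together give collective convergence on convergent sequences. It contradicts $\kappa_0\in\rho(B)$. Since $B^N(\kappa_0)$ is Fredholm of index zero by Lemma~\ref{lem:Fredholm-BNn}, injectivity gives invertibility, so $\kappa_0\in\rho(B^N)$ for all large $N$.
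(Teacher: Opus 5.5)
Your proof is correct. The regular-convergence part follows the paper's proof essentially step for step: the same splittings $B=A+K$ and $B^N=A^N+K^N$, and stable convergence of $A^N(\kappa)$ from the pointwise tail estimate (the paper's $\gamma_0(N,\Bvarphi_T)\to 0$) together with the uniform inverse bound of Lemma~\ref{lem:invertible}. Uniform convergence of $K^N(\kappa)$ likewise comes from \eqref{eq:zdh-diff}, and the conclusion from Lemma~\ref{lem:RegularConvergence}.

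You take a genuinely different route only for the second assertion. The paper combines regular convergence with Lemma~\ref{lem:equibounded} and invokes Theorem~\ref{thm:Karma1}. This gives $\nu(B^N,\Lambda_0)=\nu(B,\Lambda_0)$ on compact $\Lambda_0$ with $\partial\Lambda_0\subset\rho(B)$. The paper then reads off that $\sigma(B^N)\cap\Lambda_0$ is discrete, hence $\rho(B^N)\neq\emptyset$.

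You instead fix a single $\kappa_0\in\rho(B)\cap\Lambda$. Using only the definition of regular convergence, pointwise convergence and equiboundedness, you show that $B^N(\kappa_0)$ is injective for all large $N$. The index-zero property from Lemma~\ref{lem:Fredholm-BNn} then upgrades injectivity to invertibility.

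Your argument is self-contained and gives the more concrete conclusion that every fixed point of $\rho(B)\cap\Lambda$ eventually lies in $\rho(B^N)$. It isolates the mechanism inside Karma's theorem: regular convergence to an invertible limit forces eventual invertibility. It also avoids the somewhat implicit step in the paper from equality of multiplicities to nonemptiness of the resolvent set. The paper's route, in exchange, yields the multiplicity identity at the same time, which is what Theorem~\ref{thm:convergence} later uses.

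You are also right that $A^N(\kappa)-A(\kappa)$ does not tend to zero in operator norm, since traces of $\BX$ fill $\BH^{-\frac12}(\Curl,\Gamma_R)$ and the tail weights match. So a Neumann-series perturbation at $\kappa_0$ would fail, and the compactness argument is genuinely needed.
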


\begin{proof}
For $\Bvarphi\in \BX$, $\Bpsi\in \BX$, let $a_{1n}^m,\,b_{1n}^m$ and $a_{2n}^m,\,b_{2n}^m$ be the Fourier coefficients for $\Bvarphi_T$ and $\Bpsi_T$, respectively. The definition of $A(\kappa) $ and $A^N(\kappa) $ implies that
\begin{align*}
&(q^N A(\kappa) \Bvarphi - A^N(\kappa) p^N \Bvarphi, \Bpsi)_{\BX} =( (A(\kappa) - A^N(\kappa))\Bvarphi, \Bpsi)_{\BX} \\
=&-\rmi\kappa \langle (\calT_1(\kappa) -\calT_1^N(\kappa) )\Bvarphi_T,\Bpsi_T \rangle_{\Gamma_R}\\
=& -\rmi\kappa R^2 \sum_{n=N+1}^{\infty}\sum_{m=-n}^n \frac{\rmi\kappa R}{\delta_n(\rmi)}a_{1,n}^m\overline{a_{2,n}^m} +  \frac{\delta_n(\rmi)}{\rmi\kappa R}b_{1,n}^m\overline{b_{2,n}^m}.
\end{align*}
By using \eqref{eq:zdh}, we obtain
\begin{align*}
&(q^N A(\kappa) \Bvarphi - A^N(\kappa) p^N \Bvarphi, \Bpsi)_{\BX} \\
\le& CR^2 \sum_{n=N+1}^{\infty}\sum_{m=-n}^n (n(n+1))^{-\frac12}\abs{a_{1,n}^m}\abs{a_{2,n}^m} + (n(n+1))^{\frac12}\abs{b_{1,n}^m}\abs{b_{2,n}^m}\\
\le& C\gamma_0(N,\Bvarphi_T) \N{\Bvarphi_T}_{\BH^{-\frac12}(\Curl, \Gamma_R)} \N{\Bpsi_T}_{\BH^{-\frac12}(\Curl, \Gamma_R)}\\
\le&  C\gamma_0(N,\Bvarphi_T) \N{\Bvarphi}_{\BX} \N{\Bpsi}_{\BX}.
\end{align*}
Therefore,
\begin{equation*}
\N{q^N A(\kappa)  \Bvarphi - A^N(\kappa)  p^N \Bvarphi}_{\BX} \le C\gamma_0(N,\Bvarphi_T) \N{\Bvarphi}_{\BX} \to 0,\quad N\to\infty,\ \forall\Bvarphi\in \BX,
\end{equation*}
together with the fact that $A^N(\kappa) $ is invertible by Lemma \ref{lem:invertible}, $A^N(\kappa) $ converges stably to $A(\kappa)$. 
On the other hand, 
\begin{align*}
&(q^N K(\kappa) \Bvarphi - K^N(\kappa) p^N \Bvarphi, \Bpsi)_{\BX} \\
=&( (K(\kappa) - K^N(\kappa))\Bvarphi, \Bpsi)_{\BX}
= -\rmi\kappa \langle (\calT_2(\kappa)-\calT_2^N(\kappa))\Bvarphi_T,\Bpsi_T \rangle_{\Gamma_R}\\
=& -\rmi\kappa R^2 \sum_{n=N+1}^{\infty}\sum_{m=-n}^n 
\rmi\kappa R \left( \frac{1}{\delta_n(\kappa)} - \frac{1}{\delta_n(\rmi)} \right) a_{1,n}^m\overline{a_{2,n}^m}
+ \frac{1}{\rmi\kappa R}(\delta_n(\kappa) - \delta_n(\rmi)) b_{1,n}^m\overline{b_{2,n}^m}.
\end{align*}
From \eqref{eq:zdh} and \eqref{eq:zdh-diff}, we have
\begin{align*}
&(q^N K(\kappa) \Bvarphi - K^N(\kappa) p^N \Bvarphi, \Bpsi)_{\BX}\\
\le& CR^2 \sum_{n=N+1}^{\infty}\sum_{m=-n}^n (n(n+1))^{-\frac32}\abs{a_{1,n}^m}\abs{a_{2,n}^m} + (n(n+1))^{-\frac12}\abs{b_{1,n}^m}\abs{b_{2,n}^m}\\
\le& \frac{CR^2}{N} \sum_{n=N+1}^{\infty}\sum_{m=-n}^n (n(n+1))^{-\frac12}\abs{a_{1,n}^m}\abs{a_{2,n}^m} + (n(n+1))^{\frac12}\abs{b_{1,n}^m}\abs{b_{2,n}^m}\\
\le& \frac{C}{N} \N{\Bvarphi_T}_{\BH^{-\frac12}(\Curl, \Gamma_R)} \N{\Bpsi_T}_{\BH^{-\frac12}(\Curl, \Gamma_R)}
\le \frac{C}{N} \N{\Bvarphi}_{\BX} \N{\Bpsi}_{\BX}.
\end{align*}
Therefore,
\begin{equation*}
\N{q^N K(\kappa) \Bvarphi - K^N(\kappa) p^N \Bvarphi}_{\BX} \le \frac{C}{N} \N{\Bvarphi}_{\BX} \to 0,\quad N\to\infty,\ \forall\Bvarphi\in \BX,
\end{equation*}
and $K^N(\kappa)$ converges uniformly to $K(\kappa)$. Since $A(\kappa)$ is invertible and $K(\kappa)$ is compact, by Lemma \ref{lem:RegularConvergence}, $B^N(\kappa)$ converges regularly to $B(\kappa)$.  Together with Lemma \ref{lem:equibounded}, for sufficient large $N$, we conclude $\nu(B^N,\Lambda_0) = \nu(B,\Lambda_0)$ from Theorem \ref{thm:Karma1} for every compact $\Lambda_0\subset \Lambda$ with $\partial\Lambda_0\subset\rho(B)$. This implies that $\sigma(B^N)\cap\Lambda_0$ is discrete and thus $\rho(B^N)\neq\emptyset$.
\end{proof}

Let $\calX = \calY = \BX, \ \calX_n = \calY_n = \BX_n$. Let $q_n: \BX\to\BX_n$ be the projection
\begin{equation}
(q_n\Bvarphi, \Bpsi_n)_{\BX} =  (\Bvarphi, \Bpsi_n)_{\BX} \quad\forall\Bpsi_n\in\BX_n,
\label{eq:qn}
\end{equation}
and $p_n = q_n$.

\begin{lemma}\label{lem:BNn-regular}
For $\kappa\in \Lambda$ and $N$ sufficient large, $B^N_n(\kappa)$ converges regularly to $B^N(\kappa)$.
\end{lemma}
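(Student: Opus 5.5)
We will apply Lemma \ref{lem:RegularConvergence} with $A=A^N(\kappa)$, $K=K^N(\kappa)$, $A_n=A_n^N(\kappa)$ and $K_n=K_n^N(\kappa)$. The spaces are $\calX=\calY=\BX$ and $\calX_n=\calY_n=\BX_n$, and $p_n=q_n$ is the $\BX$-orthogonal projection \eqref{eq:qn}. The projection property $\N{\Bvarphi-q_n\Bvarphi}_{\BX}\to0$ for every $\Bvarphi\in\BX$ follows from the density of $\bigcup_n\BX_n$ in $\BX$. This density is the standard approximation property of (curved) N\'ed\'elec edge elements: for smooth fields one uses the edge interpolant, which converges in $\Hcurl$; the general case then follows by density of smooth fields with vanishing tangential trace on $\Gamma_D$ in $\BX$.

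The hypotheses of Lemma \ref{lem:RegularConvergence} are verified as follows.
\begin{enumerate}
\item $A^N(\kappa)$ is invertible by Lemma \ref{lem:invertible}.
\item $K^N(\kappa)$ is compact. It equals $-\rmi\kappa M\calT_2^N(\kappa)\gamma_T$, and $\calT_2^N(\kappa)$ has finite rank, being a finite sum over $n\le N$. Alternatively, compactness follows exactly as in Lemma \ref{lem:compact}.
\item The key observation is that, since $p_n=q_n$ is the $\BX$-orthogonal projection and all operators are defined through the same sesquilinear forms restricted to $\BX_n$, we have
\begin{equation*}
A_n^N(\kappa)=q_nA^N(\kappa)|_{\BX_n},\qquad K_n^N(\kappa)=q_nK^N(\kappa)|_{\BX_n}.
\end{equation*}
To check this, note that for $\Bvarphi_n,\Bpsi_n\in\BX_n$,
\begin{equation*}
(q_nA^N(\kappa)\Bvarphi_n,\Bpsi_n)_{\BX}=(A^N(\kappa)\Bvarphi_n,\Bpsi_n)_{\BX},
\end{equation*}
which is the defining form of $A_n^N(\kappa)$. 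Hence the second alternative in Lemma \ref{lem:RegularConvergence} applies to $K_n^N$.
\end{enumerate}

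It remains to show that $A_n^N(\kappa)$ converges stably to $A^N(\kappa)$. For convergence, take $\Bvarphi\in\BX$. Then
\begin{equation*}
\N{A_n^N(\kappa)q_n\Bvarphi-q_nA^N(\kappa)\Bvarphi}_{\BX}=\N{q_nA^N(\kappa)(q_n\Bvarphi-\Bvarphi)}_{\BX}\le\N{A^N(\kappa)}\,\N{q_n\Bvarphi-\Bvarphi}_{\BX}\to0,
\end{equation*}
using the equiboundedness from Lemma \ref{lem:equibounded}.

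For stability we need a uniform bound on $(A_n^N(\kappa))^{-1}$. This is where the main point lies, but the structure of $a^{(1)}_\kappa$ makes it straightforward. Lemma \ref{lem:invertible} already asserts the discrete bound. Its justification is that the proof of Lemma \ref{lem:inf-sup} is a coercivity-type argument, not merely an inf-sup argument: the test function $\Bpsi=\overline{(1/(\rmi\kappa))}\Bvarphi$ lies in $\BX_n$ whenever $\Bvarphi\in\BX_n$. Moreover, truncating $\calT_1$ to $\calT_1^N$ preserves the sign property $-\Re\langle\calT_1^N(\kappa)\Bxi,\Bxi\rangle_{\Gamma_R}\ge0$, since $\delta_n(\rmi)<0$ for every $n$. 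The same argument on $\BX_n$ therefore gives
\begin{equation*}
\N{(A_n^N(\kappa))^{-1}}\le C^{-1},
\end{equation*}
with $C=\min(\alpha_2^2,\alpha_2/\alpha_1)$, uniformly in $n$ and $N$. This yields stable convergence.

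Lemma \ref{lem:RegularConvergence} then gives regular convergence of $B_n^N(\kappa)=A_n^N(\kappa)+K_n^N(\kappa)$ to $B^N(\kappa)$. The requirement that $N$ be sufficiently large enters only so that $B^N$ satisfies $\rho(B^N)\neq\emptyset$, by Lemma \ref{lem:BN-regular}. This is needed when the result is combined with Theorem \ref{thm:Karma1}.
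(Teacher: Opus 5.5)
Your proposal is correct and follows the paper's proof essentially step by step. Both use the identities $A_n^N(\kappa)=q_nA^N(\kappa)|_{\BX_n}$ and $K_n^N(\kappa)=q_nK^N(\kappa)|_{\BX_n}$, get convergence from $\N{q_nA^N(\kappa)(\Bvarphi-p_n\Bvarphi)}_{\BX}$, take stability from Lemma \ref{lem:invertible}, use compactness of $K^N(\kappa)$, and conclude with Lemma \ref{lem:RegularConvergence}. Your extra remarks only make explicit what the paper leaves to ``analogously'': the discrete coercivity with the test function $\overline{(1/(\rmi\kappa))}\Bvarphi_n\in\BX_n$, the sign of $\delta_n(\rmi)$ surviving truncation, and the finite rank of $\calT_2^N(\kappa)$.
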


\begin{proof}
Based on the definitions of the projections $q_n$ and $p_n$, we have
\begin{equation*}
\N{\Bvarphi-q_n\Bvarphi}_{\BX}\to 0,\quad \N{\Bvarphi-p_n\Bvarphi}_{\BX}\to 0,\qquad n\to\infty,\ \ \forall \Bvarphi\in\BX.
\end{equation*}
For any $\Bvarphi_n, \Bpsi_n\in\BX_n$, 
\begin{equation*}
(q_nK^N(\kappa)\Bvarphi_n,\Bpsi_n)_{\BX} = (K^N(\kappa)\Bvarphi_n,\Bpsi_n)_{\BX} = (K_n^N(\kappa)\Bvarphi_n,\Bpsi_n)_{\BX}
\end{equation*}
implies that $q_nK^N(\kappa)|_{\BX_n} = K_n^N(\kappa)$.
Analogously, $q_nA^N(\kappa)|_{\BX_n} = A_n^N(\kappa)$. Thus
\begin{equation*}
\N{q_nA^N(\kappa)\Bvarphi - A^N_n(\kappa)p_n\Bvarphi}_{\BX}
= \N{q_nA^N(\Bvarphi-p_n\Bvarphi)}_{\BX}
\le C\N{\Bvarphi-p_n\Bvarphi}_{\BX}\rightarrow 0.
\end{equation*}
Since $A^N_n(\kappa)$ is invertible by Lemma \ref{lem:invertible}, the above estimate shows that $A^N_n(\kappa)$ converges stably to $A^N(\kappa)$. 

In addition, by the same lemma, $A^N(\kappa)$ is invertible.
Using the same approach as in the proofs of Lemma \ref{lem:compact} and Theorem \ref{thm:Fredholm}, it follows that both $\calT^N_2(\kappa)\gamma_T$ and $K^N(\kappa)$ are compact. Then Lemma \ref{lem:RegularConvergence} implies that $B^N_n(\kappa)$ converges regularly to $B^N(\kappa)$.
\end{proof}

Before analyzing the discretization error, we consider the computation of the dual product $\langle\calT(\kappa)\Bxi,\Beta\rangle_{\Gamma_R}$ for arbitrary tangential vectors $\Bxi$ and $\Beta$. Let $a_{n}^m(\Bxi),\,b_{n}^m(\Bxi)$ be the Fourier coefficients for $\Bxi$, i.e.,
\begin{align*}
&a_{n}^m(\Bxi) = \int_{\partial B_1}\Bxi(R,\hBx)\cdot\overline{\BU_n^m}(\hBx)\rmd\hBx, 
&&b_{n}^m(\Bxi) = \int_{\partial B_1}\Bxi(R,\hBx)\cdot\overline{\BV_n^m}(\hBx)\rmd\hBx.
\end{align*}
By using the Parseval's theorem, the dual product can be computed by 
\begin{equation*}
\langle\calT(\kappa)\Bxi, \Beta\rangle_{\Gamma_R} =  R^2 \sum_{n=1}^{\infty}\sum_{m=-n}^n \left(
\frac{\rmi\kappa R}{\delta_n(\kappa)}a_{n}^m(\Bxi)\overline{a_{n}^m(\Beta)}
+ \frac{\delta_n(\kappa)}{\rmi\kappa R}b_{n}^m(\Bxi)\overline{b_{n}^m(\Beta)}\right).
\end{equation*} 
For the truncated Calder\'on operator $\calT^N(\kappa)$, the dual product $\langle\calT^N(\kappa)\Bxi, \Beta\rangle_{\Gamma_R}$ follows analogously, obtained by summing the relation over $n=1$ to $N$. By the definition of $\BU_n^m$ and $\BV_n^m$, it is easy to see that $\overline{\BU_n^m}=\BU_n^{-m}$ and $\overline{\BV_n^m}=\BV_n^{-m}$ and thus
$a_{n}^m(\overline{\Bxi})=\overline{a_{n}^{-m}(\Bxi)}$ and $b_{n}^m(\overline{\Bxi})=\overline{b_{n}^{-m}(\Bxi)}$. Consequently, we have
\begin{align}
\langle\calT^N(\kappa)\Bxi,\Beta\rangle_{\Gamma_R}
&= R^2\sum_{n=1}^N\sum_{m=-n}^n\frac{i\kappa R}{\delta_n(\kappa)}a_n^m(\Bxi)\overline{a_n^m(\Beta)}
+ \frac{\delta_n(\kappa)}{i\kappa R}b_n^m(\Bxi)\overline{b_n^m(\Beta)}\nn\\
&= R^2\sum_{n=1}^N\sum_{m=-n}^n\frac{i\kappa R}{\delta_n(\kappa)}a_n^{-m}(\Bxi)\overline{a_n^{-m}(\Beta)}
+ \frac{\delta_n(\kappa)}{i\kappa R}b_n^{-m}(\Bxi)\overline{b_n^{-m}(\Beta)}\nn\\
&= R^2\sum_{n=1}^N\sum_{m=-n}^n\frac{i\kappa R}{\delta_n(\kappa)}\overline{a_n^m(\overline{\Bxi})}a_n^m(\overline{\Beta})
+ \frac{\delta_n(\kappa)}{i\kappa R}\overline{b_n^m(\overline{\Bxi})}b_n^m(\overline{\Beta})\nn\\
&= \langle\calT^N(\kappa)\overline{\Beta},\overline{\Bxi}\rangle_{\Gamma_R}.
\label{eq:T-adjoint}
\end{align}

Define the space 
\begin{equation*}
\BH^{t}(\curl,\Omega) = \{ \Bpsi\in\BH^t(\Omega): \,\curl\Bpsi\in\BH^t(\Omega) \}
\end{equation*}
for $1/2<t\le 1$.

\begin{theorem}\label{thm:convergence}
Let $\Lambda_0$ be a compact subset of $\Lambda$ such that $\partial\Lambda_0\subset\rho(B)$ and $\sigma(B)\cap\Lambda_0=\{\kappa_0\}$.
Assume that $\gamma_T G(B,\kappa_0)\subset \BH^{-\frac{1}{2}+s}(\Curl,\Gamma_R)$ and $G(B^N,\kappa^N)\subset \BH^{t}(\curl,\Omega)$ with $s>0$ and $1/2<t\le 1$.
Then for sufficiently large $N$ and $n$, $\nu(B,\kappa_0) = \nu(B^N,\kappa^N) = \nu(B^N_n,\kappa^N_n)$.
Moreover, for each $\kappa^N\in\sigma(B^N)\cap\Lambda_0$ and each $\kappa_n^N\in\sigma(B_n^N)\cap\Lambda_0$, it holds
\begin{align*}
&\abs{\kappa^N-\kappa_0}\le C_1 N^{-s/r},\\
&\abs{\kappa_n^N-\kappa^N}\le C_2 h_n^{2t/r^N},
\end{align*}
where $r$ and $r^N$ are the maximum length of Jordan chain of $\kappa$ and $\kappa^N$.
The constant $C_1$ depends on $\Lambda_0$ but is independent on $N$,  and $C_2$ depends on $\Lambda_0$ and $N$ but is independent on $h_n$. 
\end{theorem}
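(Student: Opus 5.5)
The proof splits into two stages, $B\to B^N$ and $B^N\to B^N_n$. In each stage we use the abstract theory of Theorems \ref{thm:Karma1}--\ref{thm:Karma3}, with the hypotheses already supplied by Lemmas \ref{lem:Fredholm-BNn}--\ref{lem:BNn-regular}.

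\textbf{Stage 1: $B^N\to B$.} Take $\calX=\calY=\BX$ and $p^N=q^N=\mathrm{id}$. By Theorem \ref{thm:Fredholm}, $B\in\Phi_0(\Lambda,\calL(\BX,\BX))$ with $\rho(B)\neq\emptyset$. Lemma \ref{lem:equibounded} gives equiboundedness and Lemma \ref{lem:BN-regular} gives regular convergence. Theorem \ref{thm:Karma1} then yields $\nu(B^N,\Lambda_0)=\nu(B,\Lambda_0)=\nu(B,\kappa_0)$ for large $N$.

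For the rate we use Theorem \ref{thm:Karma2} and estimate
\[
\varepsilon_N=\sup_{\kappa\in\partial\Lambda_0,\ \Bx\in G(B,\kappa_0),\ \N{\Bx}_{\BX}=1}\N{(B^N(\kappa)-B(\kappa))\Bx}_{\BX}.
\]
By the Riesz representation, $((B^N(\kappa)-B(\kappa))\Bvarphi,\Bpsi)_{\BX}=\rmi\kappa\langle(\calT(\kappa)-\calT^N(\kappa))\Bvarphi_T,\Bpsi_T\rangle_{\Gamma_R}$. Lemma \ref{lem:TruncationError} and the trace bound $\N{\Bpsi_T}_{\BH^{-1/2}(\Curl,\Gamma_R)}\le C\N{\Bpsi}_{\BX}$ then give
\[
\N{(B^N(\kappa)-B(\kappa))\Bx}_{\BX}\le C N^{-s}\N{\Bx_T}_{\BH^{s-1/2}(\Curl,\Gamma_R)}.
\]
Here $\gamma_s\le1$, and $\kappa$ ranges over the compact set $\partial\Lambda_0$, so the constants are uniform. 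Since $G(B,\kappa_0)$ is finite dimensional and, by assumption, $\gamma_T G(B,\kappa_0)\subset\BH^{-1/2+s}(\Curl,\Gamma_R)$, all norms on it are equivalent. Hence $\N{\Bx_T}_{\BH^{s-1/2}(\Curl)}\le C\N{\Bx}_{\BX}$, so $\varepsilon_N\le CN^{-s}$ and $\abs{\kappa^N-\kappa_0}\le C_1N^{-s/r}$.

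\textbf{Stage 2: $B^N_n\to B^N$.} Fix $N$ large enough that Lemma \ref{lem:BN-regular} gives $\rho(B^N)\neq\emptyset$ and $\partial\Lambda_0\subset\rho(B^N)$. The second property holds because the eigenvalue counts coincide for nearby sets, so after shrinking $\Lambda_0$, $\kappa^N$ is the only eigenvalue inside it. Lemmas \ref{lem:Fredholm-BNn}, \ref{lem:equibounded} and \ref{lem:BNn-regular} with Theorem \ref{thm:Karma1} give equality of multiplicities. Since $B^N_n=q_nB^N|_{\BX_n}$, Theorem \ref{thm:Karma3} applies with $r_n$ the inclusion $\BX_n\hookrightarrow\BX$. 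Then $\N{r_n}=1$ and $r_np_n\Bx-p_n\Bx=0$, so $\epsilon_n=d_nd_n^*$ with
\[
d_n=\max_{\Bx\in G(B^N,\kappa^N),\ \N{\Bx}_{\BX}=1}\mathrm{dist}(\Bx,\BX_n),
\]
and $d_n^*$ defined likewise for $G((B^N)^*,\overline{\kappa^N})$.

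For $d_n$ we use the edge-element interpolation estimate on curved Dubois meshes: for $\Bu\in\BH^t(\curl,\Omega)$, $1/2<t\le1$, $\inf_{\Bv_n\in\BX_n}\N{\Bu-\Bv_n}_{\BX}\le Ch^t\N{\Bu}_{\BH^t(\curl,\Omega)}$ (Monk \cite{Monk03}). Together with finite dimensionality of $G(B^N,\kappa^N)$ this gives $d_n\le Ch_n^t$.

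\textbf{Main obstacle: the adjoint estimate $d_n^*\le Ch_n^t$.} The regularity assumption only covers $G(B^N,\kappa^N)$, so the adjoint eigenspace must be related to it. The plan is to use the symmetry \eqref{eq:T-adjoint}, $\langle\calT^N(\kappa)\Bxi,\Beta\rangle=\langle\calT^N(\kappa)\overline{\Beta},\overline{\Bxi}\rangle$. Together with the obvious symmetry of the curl and mass terms, this gives $a^N_\kappa(\Bvarphi,\Bpsi)=a^N_\kappa(\overline{\Bpsi},\overline{\Bvarphi})$. One then checks that $(B^N)^*(\lambda)=[B^N(\overline\lambda)]^*$ satisfies $(B^N)^*(\overline\lambda)\overline{\Bvarphi}=\overline{B^N(\lambda)\Bvarphi}$, so conjugation maps Jordan chains of $B^N$ at $\kappa^N$ onto Jordan chains of $(B^N)^*$ at $\overline{\kappa^N}$. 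The care needed is in the bookkeeping of derivatives: conjugating $B^{N,(j)}(\lambda)$ gives the $j$-th derivative of the adjoint function. Thus $G((B^N)^*,\overline{\kappa^N})=\overline{G(B^N,\kappa^N)}\subset\BH^t(\curl,\Omega)$. Since $\BX_n$ is closed under conjugation and $q_n^*\BX_n=\BX_n$ (as $q_n$ is an orthogonal projection), $d_n^*\le Ch_n^t$. Hence $\epsilon_n\le Ch_n^{2t}$ and $\abs{\kappa^N_n-\kappa^N}\le C_2h_n^{2t/r^N}$, with $C_2$ depending on $N$ through $B^N$. Combining the two stages gives $\nu(B,\kappa_0)=\nu(B^N,\kappa^N)=\nu(B^N_n,\kappa^N_n)$.
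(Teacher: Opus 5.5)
Your proposal is correct and follows the paper's proof essentially step by step. For the truncation stage you combine Theorem~\ref{thm:Karma2} with Lemma~\ref{lem:TruncationError}, and for the discretization stage you use Theorem~\ref{thm:Karma3} with $r_n$ the inclusion $\BX_n\hookrightarrow\BX$, the symmetry \eqref{eq:T-adjoint} to get $G((B^N)^*,\overline{\kappa^N})=\overline{G(B^N,\kappa^N)}$, and Monk's edge-element approximation results to get $d_nd_n^*\le Ch_n^{2t}$. You also make explicit several points the paper leaves implicit: norm equivalence on the finite-dimensional $G(B,\kappa_0)$ to get $\varepsilon_N\le CN^{-s}$, closedness of $\BX_n$ under conjugation, and isolating $\kappa^N$ in a smaller compact set so that Theorem~\ref{thm:Karma3} applies to $B^N$.
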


\begin{proof}
By definition,
\begin{align*}
&\left|(B^N(\kappa)p^N\Bvarphi - q^N B(\kappa)\Bvarphi,\Bpsi)_{\BX}\right|
= \left|-i\kappa\langle (\calT(\kappa) - \calT^N(\kappa))\Bvarphi_T,\Bpsi_T\rangle_{\Gamma_R}\right| 
\\
\leq\;& C\Vert (\calT(\kappa) - \calT^N(\kappa))\Bvarphi\Vert_{H^{-\frac{1}{2}}(\text{Div},\Gamma_R)}\Vert \Bpsi\Vert_{\BX}.
\end{align*}
Applying Theorem \ref{thm:Karma2},
we have
\begin{equation*}
|\kappa^N - \kappa_0|\leq \sup_{\kappa\in \partial\Lambda_0, \Bvarphi\in G(B,\kappa_0),
\Vert \Bvarphi\Vert_X = 1} C\Vert (\calT(\kappa) - \calT^N(\kappa))\Bvarphi\Vert_{H^{-\frac{1}{2}}(\text{Div},\Gamma_R)}^{1/r}.
\end{equation*}
Then by Lemma \ref{lem:TruncationError},
\begin{equation*}
|\kappa^N - \kappa_0|\leq C_1 N^{-s/r}.
\end{equation*}
Let $r_n$ be the embedding from $\BX_n$ to $\BX$. Then by the definition of $q_n$ in \eqref{eq:qn} and that of $B^N_n$ in \eqref{eq:BNn}, we have $r_n = q_n^*$.
Moreover, $B^N_n(\kappa) = q_n B^N(\kappa)r_n$ is a projection method. 
Using the property of $\calT^N(\kappa)$ in \eqref{eq:T-adjoint}, it holds that
\begin{align*}
&(B^N(\kappa)\Bvarphi,\Bpsi)_{\BX} = (\curl\Bvarphi, \curl\Bpsi) - \kappa^2(\Bvarphi,\Bpsi) - i\kappa\langle\calT^N(\kappa)\Bvarphi,\Bpsi\rangle_{\Gamma_R}\\
=\; & (\curl\overline{\Bpsi},\curl\overline{\Bvarphi}) - \kappa^2(\overline{\Bpsi},\overline{\Bvarphi}) - i\kappa\langle\calT^N(\kappa)\overline{\Bpsi},\overline{\Bvarphi}\rangle_{\Gamma_R} = (B^N(\kappa)\overline{\Bpsi},\overline{\Bvarphi})_{\BX}.
\end{align*}
Therefore $(B^N)^*({\kappa})\Bpsi := [B^N(\overline{\kappa})]^*\Bpsi = \overline{B^N(\overline{\kappa})\overline{\Bpsi}}$.
Then, successively for each $j$,
\begin{equation*}
\frac{\text{d}^{j}(B^N)^*}{\text{d}\kappa^{j}}(\kappa)\,\Bpsi
= \overline{\frac{\text{d}^{j}B^N}{\text{d}\kappa^{j}}(\overline{\kappa})\,\overline{\Bpsi}}.
\end{equation*}
Consequently, $G((B^N)^*,\overline{\kappa^N}) = \overline{G(B^N,\kappa^N})$ and the quantities $d_n$ and $d_n^*$ in Theorem \ref{thm:Karma3} satisfy
\begin{equation*}
d_n^* = \max_{\substack{\Bvarphi\in G((B^N)^*,\overline{\kappa^N}),\\\Vert \Bvarphi\Vert_{\BX} = 1}}\text{dist}(\Bvarphi,q_n^*\BX_n)
= \max_{\substack{\overline{\Bvarphi}\in G(B^N,\kappa^N),\\\Vert \Bvarphi\Vert_{\BX} = 1}}\text{dist}(\Bvarphi,r_n\BX_n)=d_n.
\end{equation*}
Therefore,
\begin{equation*}
d_n^* = d_n 
=\max_{\substack{\Bvarphi\in G(B^N,\kappa^N),\\\Vert \Bvarphi\Vert_{\BX} = 1}}\min_{\Bvarphi_n\in\BX_n}\Vert \Bvarphi - \Bvarphi_n\Vert_{\BX} .
\end{equation*}
By Theorem \ref{thm:Karma3} and approximation property
of edge element 
(Theorem 5.41 and Theorem 8.19 of \cite{Monk03}),
\begin{equation*}
|\kappa^N_n - \kappa^N| \leq C_2 h_n^{2t/r^N}.
\end{equation*}
\end{proof}

\section{Numerical Examples}\label{Sec4}

We present numerical examples to demonstrate the theory and the effectiveness of the proposed method. The computation is carried out using the linear edge element on a series of regular tetrahedral meshes $\{\calM_{h_l}\}_l$ for $\Omega$. The curved boundary $\Gamma_R$ (and possibly $\Gamma_D$) is approximately covered by triangles induced by the meshes. The ``variational crimes" of not using the exact computational domain or exact quadratures are admissible and do not spoil convergence rate. For discussions along this line, we refer the readers to, e.g., \cite{Dubios1990SINUM}, Chp.~10 of \cite{BrennerScott2008}, and Section~8.3 of \cite{Monk03}.

For a given tetrahedral mesh, let $\Bphi^j_n, j=1,2,\cdots,J,$ be the basis functions. To obtain the matrix form of \eqref{eq:eig-XNn}, let $S$ be the stiffness matrix with $S^{ij}=(\curl\Bphi^j_n,\curl\Bphi^i_n)$, $M$ be the mass matrix with $M^{ij}=(\Bphi^j_n,\Bphi^i_n)$, and $E(\kappa)$ be the matrix related to the truncated Calder\'on operator with $E^{ij}(\kappa)=\langle\calT^N(\kappa)\Bphi^j_n,\Bphi^i_n\rangle_{\Gamma_R}$. For all the examples, a fixed truncation order $N = 10$ is employed. Numerical results indicate that $10$ is enough for the computed poles in this paper (relatively small in norm). Increase of $N$ does not improve the accuracy significantly. However, it is expected that larger $N$'s and finer meshes are necessary for larger poles.  Note that the computer available to use limits the problem size. 

The nonlinear algebraic eigenvalue problem for \eqref{eq:eig-XNn} is to find $\kappa\in\bbC$ and $\mathbf{u} \ne \mathbf{0}$ such that
\begin{equation}
\label{eq:eig-F}
F(\kappa)\mathbf{u} = \mathbf{0}
\end{equation}
with 
\begin{equation*}
F(\kappa) = S - \kappa^2 M - \rmi\kappa E(\kappa).
\end{equation*}

For all examples, we employ the spectral indicator method (SIM) to compute the eigenvalues of \eqref{eq:eig-F} in a region $\Theta$ given by
\begin{equation*}
\Theta:=\{a+b\rmi\in\bbC: a\in[0,2),\, b\in(-2,0) \}.
\end{equation*}
SIM is based on contour integrals and was originally designed for non-Hermitian matrix eigenvalue problems \cite{Huang2016JCP, Huang2018NLAA}. The parallel SIM is highly scalable and has been successfully used to compute several nonlinear eigenvalue problems \cite{Gong2022MC, MaSun2023, XiLinSun2024CMA}. Unlike Newton-type or gradient descent methods, the parallel SIM does not require initial guesses for the resonance locations, which is particularly important given the complexity of the spectrum.

\subsection{Unit Ball}

Let the obstacle $D$ be the unit ball centered at the origin. For the PEC boundary condition \eqref{eq:e2}, the resonances are $\kappa$'s such that
\begin{equation}
\label{eq:exactKappa}
\{\kappa\in\bbC: \, d_n(\kappa) =h^{(1)}_n(\kappa)z^{(1)}_n(\kappa)= 0,\, n\in\bbN\}.
\end{equation}
Figure~\ref{Fig:Exact} displays the exact resonances in  $\Theta$. For each positive integer $n$, a resonance $\kappa$ satisfying $d_n(\kappa)=0$ has multiplicity $2n+1$, corresponding to the azimuthal indices $m=-n,\cdots,n$. The resonances $\kappa_1$ and $\kappa_2$ have multiplicity 3, $\kappa_3$, $\kappa_4$ and $\kappa_5$ have multiplicity 5, and $\kappa_6$ has multiplicity 7.

\begin{figure}[htbp]
	\centering
	\includegraphics[width=0.8\textwidth]{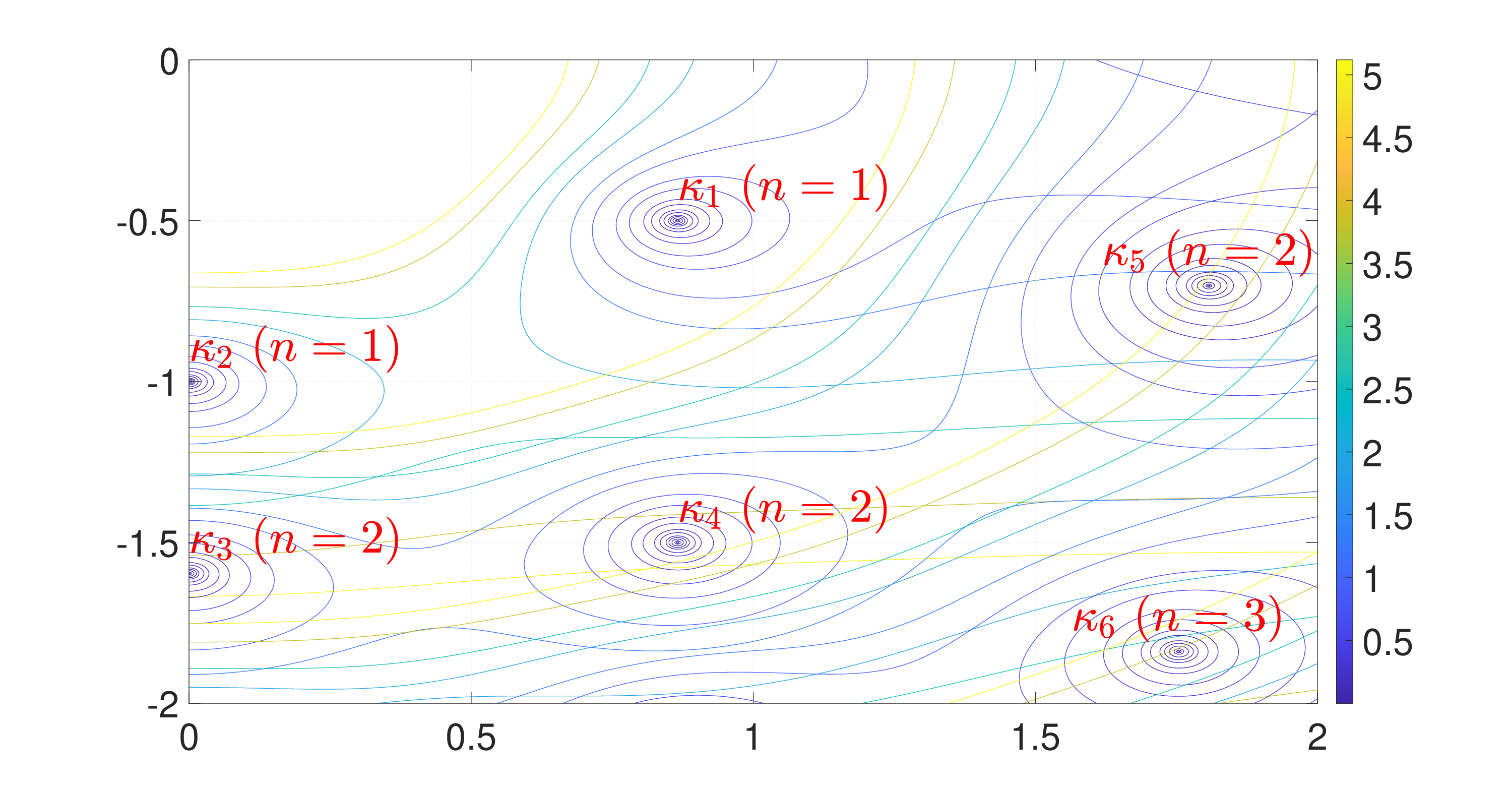} 
	\caption{Poles for the unit ball (zeros of $d_n(\kappa)$).}
	\label{Fig:Exact}
\end{figure}

We truncate the exterior domain by an artificial spherical boundary of radius $R=1.3$ centered at the origin. The computed resonances in $\Theta$ are shown in Figure \ref{Fig:UnitBall}, which are consistent with the exact values. Note that when $\partial D$ is a sphere, these resonances are related to Mie resonances \cite{Bohren1983}.

\begin{figure}[htbp]
	\centering
	\includegraphics[width=0.8\textwidth]{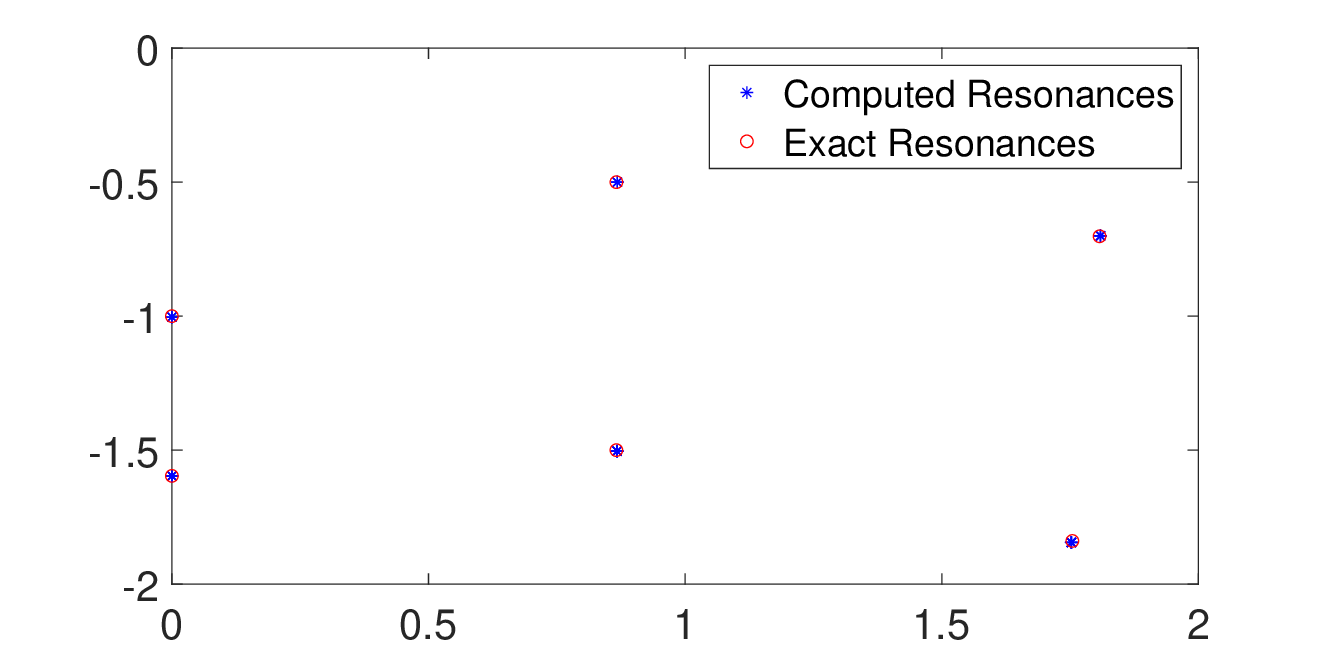}
	\caption{Computed resonances for the unit ball.}
	\label{Fig:UnitBall}
\end{figure}

Let $\kappa_{h_l}$ be the resonance computed on the mesh $\calM_{h_l}$ and $\kappa$ be the exact resonance obtained from \eqref{eq:exactKappa}. The convergence order is defined as
\begin{equation*}
r_l = -\frac{\log\left(\abs{\kappa_{h_l} - \kappa} / \abs{\kappa_{h_{l-1}} - \kappa}\right)}{\log(N_l/ N_{l-1})^{1/3}},
\end{equation*}
where $N_l$ is the number of degrees of freedom (DOFs) for mesh $\calM_{h_l}$. To facilitate comparison, we average the $J_i$ numerically clustered resonances $\kappa^j_{i,h_l}$ that approximate $\kappa_i$ by defining $\hat\kappa_{i,h_l}=\frac{1}{J_i}\sum_{j=1}^{J_i}\kappa^j_{i,h_l}$, and then calculate the error $|\hat\kappa_{i,h_l} -\kappa_i|$. In Table \ref{Tab:UnitBall}, we present the average of  computed resonances and the convergence orders for the two resonances with the smallest absolute values. It is observed that, for the unit ball, the convergence orders of the first two resonances are approximately two.

\begin{table}[h]
	\begin{center}
		\resizebox{\linewidth}{!}{
			\begin{tabular}{rcccc}
				\hline
				$N_l$&$\hat\kappa_{1,h_l}$&Ord&$\hat\kappa_{2,h_l}$&Ord\\
				\hline
				2703&0.875950 - 0.502153i&&0.867324 - 1.515403i&\\
				\hline
				5484&0.873037 - 0.501016i&1.53&0.869709 - 1.512957i&0.58\\
				\hline
				10758&0.870205 - 0.501296i&2.15&0.867687 - 1.506936i&2.83\\
				\hline
				21624&0.868509 - 0.500110i&2.43&0.866751 - 1.504777i&1.67\\
				\hline
				43872&0.867836 - 0.499886i&1.34&0.867354 - 1.503993i&0.59\\
				\hline
				86064&0.867132 - 0.500057i&2.20&0.866770 - 1.502301i&2.47\\
				\hline
			\end{tabular}
		}
		\caption{Computed resonances 
			and convergence orders for the unit ball.}
		\label{Tab:UnitBall}
	\end{center}
\end{table}

\subsection{Unit Cube}

Let the obstacle be the unit cube $D=(-\frac12,\frac12)^3$ and $R=1$. Six resonances are found in $\Theta$. The values on the finest mesh with $N_l=130928$  DOFs are 
\begin{align*}
	&0.000000 - 1.556369i,\ 
	&&0.000000 - 1.556234i, \
	&&0.000000 - 1.556268i, \\
	&1.315827 - 0.715144i, \
	&&1.315960 - 0.715407i, \
	&&1.316126 - 0.715631i.
\end{align*}

In this case, the exact resonances are unknown. We use the relative errors and define the convergence order as
\begin{equation*}
r_l = -\frac{\log\left(\abs{\kappa_{h_l}-\kappa_{h_{l-1}}} / \abs{\kappa_{h_{l-1}}-\kappa_{h_{l-2}}}\right)}{\log (N_l / N_{l-1})^{1/3}}.
\end{equation*}
Table~\ref{Tab:UnitCube} lists the computed resonances and the convergence orders for the two resonances with the smallest absolute values. 

\begin{table}[h]
	\begin{center}
		\resizebox{\linewidth}{!}{
			\begin{tabular}{rcccc}
				\hline
				$N_l$&$\hat\kappa_{1,h_l}$&Ord&$\hat\kappa_{2,h_l}$&Ord\\
				\hline
				8214&0.000000 - 1.542593i&&1.296219 - 0.697389i&\\
				\hline
				16366&0.000000 - 1.547902i&&1.302799 - 0.703003i&\\
				\hline
				32976&0.000000 - 1.551898i&1.22&1.309138 - 0.709054i&-0.05\\
				\hline
				65712&0.000000 - 1.554638i&1.64&1.313384 - 0.712724i&1.94\\
				\hline
				130928&0.000000 - 1.556290i&2.20&1.315971 - 0.715394i&1.79\\
				\hline
			\end{tabular}
		}
		\caption{Computed resonances and convergence orders for the unit cube.}
        \label{Tab:UnitCube}
	\end{center}
\end{table}

\subsection{L-shaped Domain}

Let the obstacle be the L-shaped domain $D=(-\frac12, \frac12)^3 \backslash[0, \frac12]^3$ and $R=1$. Six resonances are found in $\Theta$. The values on the finest mesh with $N_l=132624$  DOFs are
\begin{align*}
&0.000000 - 1.627763i,\
&&0.000000 - 1.627865i, \
&&0.000000 - 1.563102i, \\
&1.377776 - 0.805608i, \
&&1.336973 - 0.680441i, \
&&1.337279 - 0.680878i.
\end{align*}

Table \ref{Tab:L-shaped} lists the computed resonances and the convergence orders for the two resonances with the smallest absolute values.

\begin{table}[h]
	\begin{center}
		\resizebox{\linewidth}{!}{
			\begin{tabular}{rcccc}
				\hline
				$N_l$&$\hat\kappa_{1,h_l}$&Ord&$\hat\kappa_{2,h_l}$&Ord\\
				\hline
				8203&0.000000 - 1.546757i&&1.313144 - 0.662566i&\\
				\hline
				16578&0.000000 - 1.553662i&&1.322451 - 0.669346i&\\
				\hline
				33740&0.000000 - 1.557846i&2.12&1.329060 - 0.674856i&1.23\\
				\hline
				65624&0.000000 - 1.560786i&1.59&1.333465 - 0.677605i&2.28\\
				\hline
				132624&0.000000 - 1.563102i&1.02&1.337126 - 0.680660i&0.36\\
				\hline
			\end{tabular}
		}
		\caption{Computed resonances and convergence orders for the L-shaped domain.}
		\label{Tab:L-shaped}
	\end{center}
\end{table}

\subsection{Cavity}

Let the obstacle be $D=[-1.2,1.2]^3\backslash\left([-1,1]^3\cup [1,1.2]\times[-0.2,0.2]^2\right)$ and $R=2.3$ (see Section 5.3 of \cite{Nannen2013SISC}). The values on the finest mesh with $N_l=127480$ DOFs are
\begin{align*}
	&0.549073 - 0.299012i,\ 
	&&0.549002 - 0.298857i, \
	&&0.549044 - 0.298929i, \\
	&1.075874 - 0.307260i, \
	&&1.075719 - 0.307112i, \
	&&1.075617 - 0.307005i.
\end{align*}

Table~\ref{Tab:Cavity} lists the computed resonances and the convergence orders for the two resonances with the smallest absolute values. 

\begin{table}[h]
	\begin{center}
		\resizebox{\linewidth}{!}{
			\begin{tabular}{rcccc}
				\hline
				$N_l$&$\hat\kappa_{1,h_l}$&Ord&$\hat\kappa_{2,h_l}$&Ord\\
				\hline
				8328&0.542518 - 0.293289i&&1.053715 - 0.297677i&\\
				\hline
				15935&0.544389 - 0.294788i&&1.059832 - 0.300287i&\\
				\hline
				32405&0.546542 - 0.296474i&-0.55&1.067209 - 0.302659i&-0.65\\
				\hline
				66624&0.548347 - 0.298169i&0.41&1.073125 - 0.305457i&0.70\\
				\hline
				127480&0.549040 - 0.298933i&4.05&1.075737 - 0.307126i&3.46\\
				\hline
			\end{tabular}
		}
		\caption{Computed resonances and convergence orders for the cavity.}
		\label{Tab:Cavity}
	\end{center}
\end{table}

\section*{Acknowledgement}
The research of B. Gong is partially supported by National Natural Science Foundation of China No.12201019. The research of J. Sun is partially supported by an NSF Grant DMS-2109949 and a SIMONS Foundation Grant 711922. The research of X. Wu is partially supported by National Key R\&D Program of China 2019YFA0709600, 2019YFA0709601.

\end{document}